\newtheorem{theorem}{{\bf Theorem}}
\newtheorem{corollary}{{\bf Corollary}}
\newtheorem{remark}{\bf Remark}
\newtheorem{definition}{{\bf Definition}}
\newtheorem{lemma}{\bf Lemma}
\DeclareMathOperator*{\argmin}{argmin}
\DeclareMathOperator*{\argmax}{argmax}
\newcommand{\defeq}{\stackrel{\bigtriangleup}{=}}
\newcommand{\RR}{\mathbb{R}}
\newcommand{\NN}{\mathbb{N}}
\newcommand{\MM}{\mathcal{M}}
\newcommand{\DD}{\mathcal{D}}
\newcommand{\Mh}{{\MM}^h}
\newcommand{\PP}{\mathcal{P}}
\newcommand{\C}{\mathcal{C}}
\newcommand{\Vol}[1]{\mathrm{Vol}\left( #1 \right)}
\newcommand{\dxj}[1]{\frac{\partial}{\partial #1}}
\newcommand{\at}[2]{\left. #1 \right\rvert_{#2}}
\newcommand{\abs}[1]{\left\lvert #1 \right\rvert}
\newcommand{\hrho}{$h$-$\rho$-$\delta$ }
\newcommand{\norm}[1]{\left\| #1 \right\|}
\title{Approximating the Riemannian Metric from Point Clouds via Manifold Moving Least Squares}
\author{Barak Sober \and Robert Ravier \and Ingrid Daubechies}
\begin{document}
\maketitle
\begin{abstract}
	The approximation of both geodesic distances and shortest paths on point cloud sampled from an embedded submanifold $\MM$ of Euclidean space has been a long-standing challenge in computational geometry. Given a sampling resolution parameter $ h $, state-of-the-art discrete methods yield $ O(h) $ provable approximations. In this paper, we investigate the convergence of such approximations made by Manifold Moving Least-Squares (Manifold-MLS), a method that constructs an approximating manifold $\Mh$ using information from a given point cloud that was developed by Sober \& Levin in 2019 .
	In this paper, we show that provided that $\MM \in C^{k}$ and closed (i.e. $\MM$ is a compact manifold without boundary) the Riemannian metric of $ \Mh $ approximates the Riemannian metric of $ \MM, $.
	Explicitly, given points $ p_1, p_2 \in \MM $ with geodesic distance $ \rho_\MM(p_1, p_2) $, we show that their corresponding points $ p_1^h, p_2^h \in \Mh $ have a geodesic distance of $ \rho_\Mh(p_1^h,p_2^h) = \rho_\MM(p_1, p_2)(1 + O(h^{k-1})) $ (i.e., the Manifold-MLS is nearly an isometry).
	We then use this result, as well as the fact that $ \Mh $ can be sampled with any desired resolution, to devise a naive algorithm that yields approximate geodesic distances with rate of convergence $ O(h^{k-1}) $. 
	We show the potential and the robustness to noise of the proposed method on some numerical simulations.
\end{abstract}

\section{Introduction}
The concept of a manifold is fundamental to models in numerous research fields (e.g., mathematical physics, linear algebra, computer graphics, computer aided design). 
In recent years, manifold-based modeling has also permeated into general high dimensional data analysis as a method of circumventing the curse of dimensionality (e.g., signal processing, computer vision, biomedical imaging).
These ``\emph{data-driven}" models, however, do not follow from a set of equations or some other formal representation, but rather emerge from a general intuition/observation that data has low intrinsic dimensions, at least locally (see, for example, \cite{belkin2003laplacian,coifman2006diffusion,roweis2000LLE,saul2003think,tenenbaum2000global}).
The lack of formalism means that both explicit coordinate charts and Riemannian metric structure are usually not available; in practice, the only information available is the collection of samples, i.e., \textit{point clouds}. 

An essential tool for data analysis in modern data-driven scenarios is the ability to measure intrinsic geodesic distances.
The approximation of geodesic distances and geodesic paths on point clouds and their manifolds has been studied extensively in computer graphics literature, primarily in the context of 3D point clouds that represent 2D surfaces.
The focus of these studies is on performing efficient approximations of high approximation order (for example,  \cite{mitchell1987discrete,sethian1996fast,kimmel1998computing}).
In most of these methods there is a strong assumption that a meshing of the manifold exists, which is a nontrivial requirement for general point clouds.
To the best of our knowledge, the highest order of convergence proven in the literature is $ O(h) $ (for a resolution parameter $ h $) shown in the Fast Marching Method of Kimmel and Sethian \cite{kimmel1998computing}. Numerically, the work in \cite{lichtenstein2019deep} reports $ O(h^2) $ convergence rates in numerical simulations.
Nonetheless, much less attention have been given to the general manifold case, where a triangulation cannot be achieved easily.
Memoli and Sapiro \cite{memoli2005distance} proposed an approach for general submanifold of $ \RR^D $ with convergence rates of $ O(\sqrt{h}) $.

Many works deal with the lack of triangulation by instead using standard shortest path algorithms, such as the well-known Dijkstra algorithm \cite{dijkstra1959note}, after imposing a graph structure on the point cloud based on distance information. Though nonexhaustive, see, for example, \cite{moscovich2016minimax,tenenbaum2000global,lin2008riemannian,ravier2018algorithms,ravier2018eyes}. 
However, approximating geodesic curves and distances from discrete samples in such a way may result in poor approximations if the sampling resolution is insufficient. Figure \ref{fig:DiscreteShortestPath_Problem}. gives an example of such a failure.
As discussed in Section \ref{sec:Numerical}, in such cases, it is possible (though by no means guaranteed) that the Euclidean distance between points on the manifold serve as a better approximation to the geodesic distance.

\begin{figure}[h]
	\centering
	\includegraphics[width=\linewidth]{./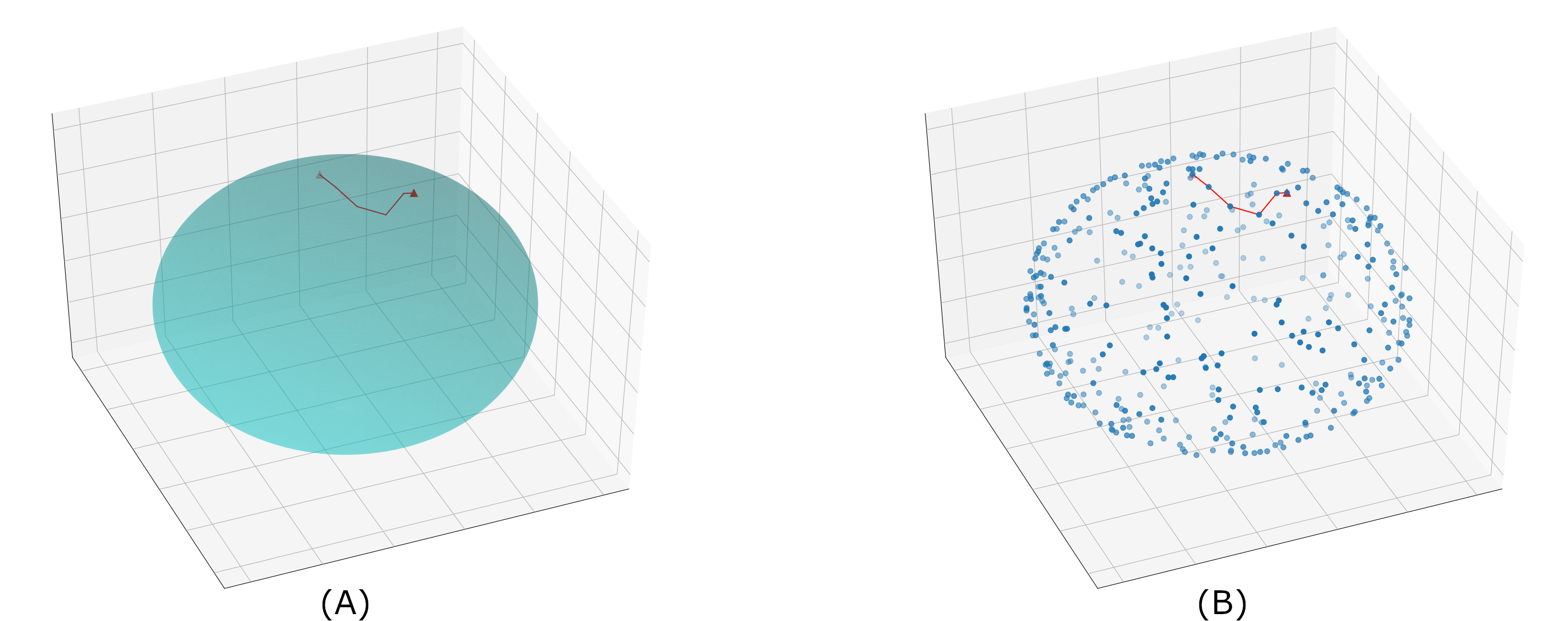}
	\caption{Shortest path from discrete samples of a 2-sphere. (A) The original manifold $ \MM $; the shortest path computed between $ p_1, p_2 $ (marked by red triangles) on discrete samples of the manifold (the path marked by the red line).
		(B) The sample set of $ \MM $ and the shortest path connecting points $ p_1, p_2 $ marked by the red triangles.}
	\label{fig:DiscreteShortestPath_Problem}
\end{figure}

The main contribution of the current work is theoretical.
In \cite{sober2019manifold}, Sober and Levin proposed a procedure called Manifold Moving Least Squares (Manifold-MLS) that defined an implicit manifold from a given point cloud.
Explicitly, given a Point Cloud sampled from a manifold $ \MM\in \C^k $ ($ k\geq 2 $) with sampling resolution $ h $ (such as the fill distance; see \eqref{def:h} below), they introduce a moving least squares (MLS) procedure to create an approximating manifold $ \Mh.$
They show that $\PP:\MM\to\Mh$, the Manifold-MLS mapping from $\MM$ to $\Mh$, is a diffeomorphism.
Similar to standard MLS approximations, the approximating manifold exists as an abstract object and can be realized at a point based upon a local neighborhood, requiring no global processing.
Sober and Levin also show that the Hausdorff distance between $ \Mh $ and $ \MM $ is bounded by $ O(h^k) $.

In this paper, we show that the Riemannian metric of $ \Mh $ approximates the Riemannian metric of $ \MM $ as well.
Specifically, let $ p_1, p_2 \in \MM $ with geodesic distance $ \rho_{\MM}(p_1, p_2) $ then
\[
	\rho_{\Mh}(p^h_1, p^h_2) = \rho_{\MM}(p_1, p_2)(1 + O(h^{k-1}))
,\]
where $ p^h_1, p^h_2\in \Mh $ are $ O(h^k) $ away from $ p_1, p_2,$  respectively, and $ \rho_{\Mh}(p^h_1, p^h_2) $ is the Riemannian distance between $ p^h_1, p^h_2 $ on $ \Mh $ (see Theorem \ref{thm:MainResult}).
We prove this by showing that $\PP$ is actually an approximate isometry given that $\MM$ is sampled densely enough. In other words, the differential of $\PP$ is close to the identity map.
In Section \ref{sec:Numerical}, we show how this result can be utilized by developing a simple brute-force algorithm that significantly improves the estimation of geodesics and their distances.
The proposed algorithm can be shown to converge with a rate of $ O(h^{k-1}),$ a significant improvement on other results.
We provide experimental evidence showing the potential of using the Manifold-MLS as well as its robustness to noise.
We believe that the current work will motivate the future development of novel, efficient ways of approximating geodesics on point clouds

\section{Definitions, Assumptions and the Approximation}\label{sec:Preliminaries}
The analysis carried in Section \ref{sec:proof} builds upon the definition of the Manifold-MLS as introduced in \cite{sober2019manifold}, which culminated in a bound on the difference between the approximated manifold $ \MM $ and its approximant with respect to the Hausdorff distance.
However, since we wish to analyze the convergence of geodesic paths, we ultimately need to bound the distortion of first derivatives as well.
Therefore, we will start by explaining some recent results in derivative estimation of standard MLS and carry on to formally define the Manifold-MLS approximation along with the sampling assumptions that will be used in the proofs below.
\subsection{Moving Least-Squares Approximation of Derivatives}\label{sec:MLS_Derivative}
A key concept we aim to use in our analysis is the approximation property of the natural derivatives of the MLS have as reported in \cite{mirzaei2015analysis}.

Let $ f:\Omega\to\RR $ be sampled at locations $ X = \{x_i\}_{i=1}^n $.
Then, we define the local polynomial $ \pi^*(x ~|~\xi) $ by 
\begin{equation}\label{eq:MLS_pi_take1}
\pi^*(x ~|~ \xi) = \argmin_{\pi\in\Pi_{k-1}^d}\sum_{i=1}^n\abs{f(x_i) - \pi(x_i)}^2\theta(\norm{\xi - x_i})
,\end{equation}
where $ \Pi_{k-1}^d $ is the space of polynomial of degree $ \leq (k-1) $ over a $ d $-dimensional domain, $ \theta $ is a locally supported (or fast decaying) weight function, and $ \norm{\cdot} $ denotes the Euclidean norm.
We define the MLS function by
\begin{equation}\label{eq:MLS_basic_pre}
s^{\text{MLS}}_{f,X}(x) \defeq \pi^*(0 ~|~ x) 
.
\end{equation}

The theory developed in \cite{mirzaei2015analysis} shows that the $ \alpha $-order derivatives of $ s^{\text{MLS}}_{f,X}(x) $ approximate the derivatives of $ f(x) $ with convergence rates of $ O(h^{k-\abs{\alpha}}) $ for $ f , \theta \in \C^k $ and $ \abs{\alpha} \leq k-1 $. 
However, the analysis of \cite{mirzaei2015analysis} is described in a more general setting, where the approximation order is derived for arbitrary fractional Sobolev norm and with a domain with Lipschitz boundary.
Since we are interested in the ordinary derivatives of the MLS approximation (more precisely, just first order derivatives) in a boundaryless domain, in this subsection we describe and justify the main result that we wish to use in this paper, which is presented in \eqref{eq:DerivativeApprox}: the MLS approximant's first order natural derivatives have optimal convergence rates to $ f $'s derivatives.

Let $ X = \{x_i\}_{i=1}^n $ be samples of a domain $ \Omega \subset\RR^d$.
Furthermore, we assume that the samples are a quasi-uniform sample set.
\begin{definition}[quasi-uniform sample set]\label{def:quasi-uniform}
	A set of data sites $ X = \{x_1,\ldots,x_n\} $ is said to be quasi-uniform with respect to a domain $ \Omega $ and a constant $ c_{\emph{qu}} > 0 $ if
	\[
	\delta_X \leq h_{X,\Omega} \leq c_{\emph{qu}}\delta_X
	,\]
	where $ h_{X,\Omega} $ is the fill distance
	\begin{equation}\label{def:h}
	h_{X,\Omega} = \sup_{x\in\Omega} \min_{x_i \in X} \norm{x - x_i}
	,\end{equation}
	and $ \delta_X $ is the separation radius defined by
	\[
	\delta_X := \frac{1}{2}\min_{i\neq j}\norm{x_i - x_j}
	.\]
	Throughout the paper we will denote for short $ h:=h_{X,\Omega} $. 
\end{definition}

Then, as described in \eqref{eq:MLS_pi_take1}, we define the MLS local polynomial $ \pi^*(x ~|~ \xi) $ as
\begin{equation}\label{eq:MLS_basic_Minimization}
\pi^*(x ~|~ \xi) = \argmin_{\pi\in\Pi_{k-1}^d}\sum_{i=1}^n\abs{f(x_i) - \pi(x_i)}^2\theta_h(\norm{\xi - x_i})
,\end{equation}
where now $ \theta_h $ is a locally supported weight function consistent across scales (i.e., $ \theta_h(th) = \Phi(t) $).
Furthermore, we assume that $ \theta_h\in\C^k $ and has a compact support of size $ sh $, and we denote by $ I(x) = \{i ~|~ \norm{x - x_i}\leq sh, x_i\in X  \} $.
Levin showed in \cite{levin1998approximation} that, given enough samples in $ I(x) $ (i.e, if the least-squares matrix is invertible), the MLS approximation defined in  \eqref{eq:MLS_basic_pre} is given by
\begin{equation}\label{eq:s_fX_MLS_pre}
s^{\text{MLS}}_{f,X}(x) = \sum_{i\in I(x)} a^*_i(x) f(x_i) 
,
\end{equation}
where the coefficients $ a^*_i(x) $ are determined by minimizing the quadratic form
\begin{equation}\label{eq:MLS_Basic_Quadratic_Form}
 \sum_{i\in I(x)}a_i(x)^2\frac{1}{\theta_h(\norm{x-x_i})}
\end{equation}
under the constraints 
\[
\sum_{i\in I(x)}a_i(x) \pi(x_i) = \pi(x),~~ \forall \pi\in \Pi_{k-1}(\RR^d)
.\]

In addition, it has been shown that $ s_{f,X}^\text{MLS}(x)\in \C^k $  \cite{levin1998approximation,wendland2004scattered}, and that it has high approximation order in both the $ L_\infty $ and Sobolev sense (see \cite{levin1998approximation,wendland2004scattered} for the $ L_\infty $ analysis,  \cite{armentano2001error} for a first order Sobolev, and \cite{mirzaei2015analysis} for arbitrary fractional Sobolev norm).
The key idea behind these results stems from the fact that $ s_{f,X} $ is exact for polynomials.
Namely,
\[
\sum_{i\in I(x)}a_i^*(x) \pi(x_i) = \pi(x)
\]
for $ \pi\in \Pi_{k-1}(\RR^d) $.
Thus,
\begin{align*}
E_{f, X}(x) &= s_{f, X} - f(x)\\ 
&= s_{f, X} - \pi(x) + \pi(x) - f(x) \\
&=  \sum_{i\in I(x)}a_i^*(x) (f(x_i) - \pi(x_i)) + \pi(x) - f(x)
.\end{align*}
Then, for a fixed but arbitrary $\hat x\in \Omega$ we can choose $\pi$ to be the Taylor approximation of $f(x)$ around $\hat x$ and then we get for all $ x\in B_{sh}(\hat x) $
\begin{align*}
\abs{E_{f, X, B_{sh}(\hat x)}(\hat x)} 
&\leq\sum_{i\in I(x)}\abs{a_i^*(x)}\abs{ (f(x_i) - \pi(x_i))} + \abs{\pi(x) - f(x)} \\
&\leq \max_{x\in B(\hat x, sh)}\abs{\pi(x) - f(x)} \left(\sum_{i\in I(\hat x)}\abs{a_i^*(x)}+1\right) 
,\end{align*}
where the number of indices in $ I(x) $ are bounded by $ N = \rho s^d $ for some constant $ \rho $ (namely, the bound is independent of $ h $; see Lemma \ref{lem:h-delta-is-rho} in the Appendix). Since the $ a_i^* $ depend smoothly on the location of their neighbors in a $ sh $-sized neighborhood, the summation can be bounded by a constant independent of $ h, x,$ and the specific sample set $ X.$ See the Appendix for a more rigorous discussion of this fact.
Since we chose $ \pi $ to be the $ (k-1) $ degree Taylor expansion of $ f(x) $ we get
\begin{align}\label{eq:TaylorBound}
\max_{x\in B(\hat x, sh)}\abs{f(x) - \pi(x)} &\leq c \abs{f}_{\C^k(B_{sh}(\hat x))} h^k 
,\end{align}
where $ \abs{f}_{\C^k(B_{sh}(\hat x))}  $ is the semi-norm defined by   
\begin{equation}
 \abs{f}_{\C^k(B_{sh}(\hat x))} \defeq \max_{\begin{subarray}{c}
 	\abs{\alpha} = k\\ x\in B_{sh}(\hat x)
 	\end{subarray} } \abs{ \partial^\alpha f(x)}
,\end{equation}
where $ \alpha = (\alpha_1, \ldots, \alpha_d) $ is a multi-index of order $ \abs{\alpha} = k $ and 
\[
\partial^\alpha f(x) = \partial^{\alpha_1}_{x_1} \partial^{\alpha_2}_{x_2} \cdots \partial^{\alpha_d}_{x_d}f(x)
.\]
As a result, we achieve the bound
\begin{equation}
	\abs{s_{f, X}(x) - f(x)} = \abs{E_{f, X, B_{sh}(\hat x)}(x)} \leq C \abs{f}_{\C^k(B_{sh}(\hat x))} h^k
.\end{equation}

If, on the other hand, one wishes to measure the error in the derivatives we get
\[
\partial^\alpha E_{f, X, \Omega}(x) = \sum_{i\in I(x)}(\partial^\alpha a_i^*(x)) (f(x_i) - \pi(x_i)) + \partial^\alpha(\pi(x) - f(x))
.\]
Note that in the current case, $\partial^\alpha$ is scalar-valued as $f:\RR^d\to \RR$.
Then, for a fixed but arbitrary $\hat x\in \Omega$ we can choose $\pi$ to be the Taylor approximation of $f(x)$ around $\hat x$ and get
\begin{equation}\label{eq:Err_Norm_Scalar}
\abs{\partial^\alpha E_{f, X}(\hat x)} \leq \max_{x\in B(\hat x, sh)}\abs{f(x) - \pi(x)}\sum_{i\in I(\hat x)} \abs{\partial^\alpha a_i^*(\hat x)} + \abs{\partial^\alpha \pi(x) - \partial^\alpha f(x)}
.\end{equation}
Since $ \pi $ is the Taylor expansion of $ f(x) $ at $ \hat x $ and $ \norm{\hat x - x} \leq sh $ we get that
\[
\abs{\partial^\alpha \pi(x) - \partial^\alpha f(x)} \leq c_1 \abs{f}_{\C(B_{sh}(\hat x))} h^{k - \abs{\alpha}}
.\]
Then, using arguments similar to Theorem 3.11 in \cite{mirzaei2015analysis} we can achieve for all $ x\in B_{sh}(\hat x) $
\begin{equation}\label{eq:a_i_deriv_order}
    \abs{\partial^\alpha a_i^*(x)} \leq c_2 h^{-\abs{\alpha}}
.\end{equation}
We can combine \eqref{eq:TaylorBound} and the above bounds with the fact that the number of indices in $ I(x) $ is bounded by $ N=\rho s^d $ to get the following lemma.
\begin{lemma}\label{lem:MLS_Deriv_scalar}
Let $f\in \C^k(\RR^d)$ be a scalar-valued function. Let $X\subset \Omega$ be a quasi-uniform unbounded set. 
Suppose $\theta_h\in \C^{k}$ is supported on $[0, sh]$.
Then, if for a fixed but arbitrary $\hat x\in \Omega$ the set $X$ is $\Pi_{k-1}(\RR^d)$-unisolvent \emph{(}i.e., the least-squares matrix resulting from \eqref{eq:MLS_basic_Minimization} is invertible\emph{)}, we get for all $ x\in B_{sh}(\hat x) $
\begin{equation}\label{eq:DerivativeApprox_scalar}
\abs{\partial^\alpha s_{f, X}(\hat x) - \partial^\alpha f(\hat x)} = \abs{\partial^\alpha E_{f, X}(\hat x)}\leq C \abs{f}_{\C^k(\hat x)} h^{k - \abs{\alpha}}
,\end{equation}
where $C$ is some constant independent of $f$ and $h$.
\end{lemma}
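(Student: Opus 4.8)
The plan is to assemble the ingredients that the exposition above has already laid on the table: the polynomial-reproduction identity, a Taylor estimate for $f$, and---the crux---a bound on the $\alpha$-order derivatives of the MLS weights $a_i^*$. First I would fix $\hat x$ and take $\pi$ to be the degree-$(k-1)$ Taylor polynomial of $f$ about $\hat x$. Because the weights reproduce $\Pi_{k-1}(\RR^d)$, the error splits exactly as in \eqref{eq:Err_Norm_Scalar} into $\sum_{i\in I(\hat x)}(\partial^\alpha a_i^*(\hat x))(f(x_i)-\pi(x_i))$ plus the differentiated Taylor remainder $\partial^\alpha(\pi-f)(\hat x)$; differentiating the reproduction identity term-by-term is legitimate since $s_{f,X}\in\C^k$. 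The two ``easy'' factors are then controlled by Taylor's theorem: $\abs{f(x_i)-\pi(x_i)}\le c\,\abs{f}_{\C^k}h^k$ for $x_i\in B_{sh}(\hat x)$ as in \eqref{eq:TaylorBound}, and the remainder of the differentiated polynomial obeys $\abs{\partial^\alpha\pi(\hat x)-\partial^\alpha f(\hat x)}\le c_1\abs{f}_{\C^k}h^{k-\abs{\alpha}}$.

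Next I would count terms. By Lemma \ref{lem:h-delta-is-rho} the cardinality $\abs{I(\hat x)}$ is bounded by a constant $N=\rho s^d$ independent of $h$. Hence, once the coefficient bound \eqref{eq:a_i_deriv_order}, namely $\abs{\partial^\alpha a_i^*(\hat x)}\le c_2 h^{-\abs{\alpha}}$, is in hand, the first sum is at most $N c_2 h^{-\abs{\alpha}}\cdot c\abs{f}_{\C^k}h^k = O(h^{k-\abs{\alpha}})$, which matches the order of the Taylor-remainder term; collecting constants then yields the claimed $C\abs{f}_{\C^k(\hat x)}h^{k-\abs{\alpha}}$.

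The genuine obstacle is establishing \eqref{eq:a_i_deriv_order}: I must show that each differentiation of the weights in the evaluation variable costs exactly one factor of $h^{-1}$. The natural route is to rescale to a reference configuration, substituting $y_i=(x_i-\hat x)/h$ so the active data sites lie in a fixed ball $B_s(0)$ and, by quasi-uniformity (Definition \ref{def:quasi-uniform}), remain $\Pi_{k-1}$-unisolvent with a separation bounded below by a constant uniform in $h$. Writing the constrained minimizer of \eqref{eq:MLS_Basic_Quadratic_Form} through its Lagrange/normal equations expresses each $a_i^*$ as an explicit rational function of a scaled moment (Gram) matrix $G$, whose entries are sums $\sum_i \Phi(\norm{y-y_i})\,p_\beta(y_i)p_\gamma(y_i)$ over a polynomial basis, together with the weight values $\Phi(\norm{y-y_i})$. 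Quasi-uniformity guarantees that $G^{-1}$ is bounded uniformly in $h$, so $a_i^*$ is a smooth function of the scaled variables with $h$-independent derivative bounds. Since $\partial_x=h^{-1}\partial_y$, the chain rule converts $\abs{\alpha}$ derivatives in $x$ into $h^{-\abs{\alpha}}$ times $\C^k$-bounded derivatives in $y$, which is precisely \eqref{eq:a_i_deriv_order}. This is the step that mirrors Theorem~3.11 of \cite{mirzaei2015analysis}; the delicate point is verifying that the uniform invertibility of $G$ survives the rescaling and depends only on $c_{\mathrm{qu}}$, $s$, and $k$, rather than on the particular sample set $X$.
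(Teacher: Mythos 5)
Your proposal is correct and follows essentially the same route as the paper's own argument: the same error decomposition using the degree-$(k-1)$ Taylor polynomial of $f$ at $\hat x$, the same bound on $\#I(\hat x)$ via Lemma \ref{lem:h-delta-is-rho}, and the same reduction of everything to the coefficient-derivative bound \eqref{eq:a_i_deriv_order}. The only difference is that you sketch a proof of \eqref{eq:a_i_deriv_order} by rescaling to a reference configuration, whereas the paper simply invokes ``arguments similar to Theorem 3.11 of \cite{mirzaei2015analysis}''; your sketch (including the caveat about uniform invertibility of the scaled Gram matrix) is consistent with how that cited result is actually established.
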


\begin{remark}
    Lemma \ref{lem:MLS_Deriv_scalar} is adapted from Theorem 3.12 of \cite{mirzaei2015analysis}, which handles a much more general case.
\end{remark}

\begin{remark}
	Wendland invest a considerable effort in showing what is the tightest support for $ \theta_h $ such that the MLS problem has a unique solution (i.e., that the least-squares matrix is invertible).
	In order to achieve that he introduces the Interior Cone Condition \cite{wendland2004scattered}, which is then inherited into the work of Mirzaei \cite{mirzaei2015analysis}.
	The main challenge in these investigations is the fact that the domain has boundary.
	Nevertheless, in the work of Levin \cite{levin1998approximation} and the following Manifold-MLS works \cite{sober2019manifold,sober2017approximation} as well as in the current paper the domain is without boundary, and thus, the Interior Cone Condition is not necessary (see the Appendix for a more detailed discussion).
\end{remark}

Although the above discussion was conducted on scalar-valued function $f$, a similar line of arguments will suit the case where $f:\RR^d\to \RR^D$.
Note that the coefficients $a_i^*(x)$ depend only on the sites of the sample set $X$ and not on the values.
Thus, $a_i^*(X)$ are the same for all $D$ coordinates in the target domain, and we can write
\[
\partial^\alpha E_{f, X}(x) = \sum_{i\in I(x)}(\partial^\alpha a_i^*(x)) (f(x_i) - \pi(x_i)) + \partial^\alpha(\pi(x) - f(x))
,\]
where $a_i^*(x):\RR^d\to\RR$, and $\pi:\RR^d\to\RR^D$.
Then, the error given in \eqref{eq:Err_Norm_Scalar} for a fixed but arbitrary $\hat x \in \Omega$ and $\pi$ being the coordinate-wise Taylor expansion of $f$ around $\hat x$ can be written for the $\ell^\textrm{th}$  coordinate ($1\leq \ell \leq D$) as
\[
\partial^\alpha E^\ell_{f, X}(x) = \sum_{i\in I(x)}(\partial^\alpha a_i^*(x)) (f^\ell(x_i) - \pi^\ell(x_i)) + \partial^\alpha (\pi^\ell(x) -  f^\ell(x) )
.\]
Then using the same considerations on a coordinate-wise level we arrive at
\begin{lemma}\label{lem:MLS_Deriv}
Let $f:\RR^d\to\RR^D$ and $f\in \C^k$.
Let $X \subset \Omega$ be a quasi uniform unbounded set. 
Suppose $\theta_h\in \C^{k}$ is supported on $[0, sh]$ with vanishing derivatives at the end points.
Then, if for a fixed but arbitrary $\hat x\in \Omega$ the set $X$ is $\Pi_{k-1}(\RR^d)$-unisolvent \emph{(}i.e., the least-squares matrix resulting from \eqref{eq:MLS_basic_Minimization} is invertible\emph{)}, we get for all $1 \leq \ell \leq D$ and all $ x\in B_{sh}(\hat x) $
\begin{equation}\label{eq:DerivativeApprox}
\abs{\partial^\alpha s^\ell_{f, X}( x) - \partial^\alpha f^\ell( x)} = \abs{\partial^\alpha E^\ell_{f, X}( x)}\leq C \norm{f}_{\C^k(B_{sh}(\hat x))} h^{k - \abs{\alpha}}
,\end{equation}
where
\begin{equation}
\norm{f}_{\C^k(B_{sh}(\hat x))} \defeq \max_{1 \leq \ell \leq D}\abs{f^\ell}_{\C^k(B_{sh}(\hat x))}
,\end{equation}
and $C$ is some constant independent of $f$ and $h$.
\end{lemma}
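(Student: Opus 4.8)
The plan is to reduce the vector-valued bound to the scalar estimate of Lemma~\ref{lem:MLS_Deriv_scalar}, exploiting the structural fact—already noted above—that the MLS coefficients $a_i^*(x)$ depend only on the geometry of the sample set $X$ and not on the sampled values. Concretely, I would fix an arbitrary coordinate index $\ell \in \{1, \ldots, D\}$ and regard $f^\ell:\RR^d\to\RR$ as a scalar-valued $\C^k$ function. Since the coefficients are obtained by minimizing the quadratic form \eqref{eq:MLS_Basic_Quadratic_Form} subject to polynomial-reproduction constraints, both of which reference only the sites $x_i$, solving \eqref{eq:MLS_basic_Minimization} componentwise produces \emph{the same} $a_i^*(x)$ for every coordinate. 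Hence the scalar MLS approximant of $f^\ell$ is exactly $s^\ell_{f,X}$, and its error $E^\ell_{f,X}$ admits the componentwise decomposition displayed just before the lemma.

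With the coordinate $\ell$ fixed, I would then apply Lemma~\ref{lem:MLS_Deriv_scalar} verbatim: the hypotheses on $X$ (quasi-uniformity, $\Pi_{k-1}(\RR^d)$-unisolvence at $\hat x$) and on $\theta_h$ ($\C^k$, supported on $[0,sh]$) are identical, and $f^\ell\in\C^k(\RR^d)$, so the lemma yields for all $x\in B_{sh}(\hat x)$
\[
\abs{\partial^\alpha E^\ell_{f, X}(x)} \leq C_\ell \abs{f^\ell}_{\C^k(B_{sh}(\hat x))} h^{k-\abs{\alpha}}
.\]
The essential point is that the constant delivered by the scalar lemma is independent of the function: tracing its derivation, it depends only on the number of active neighbors $N = \rho s^d$ (uniformly bounded in $h$ by Lemma~\ref{lem:h-delta-is-rho}) and on the uniform bound \eqref{eq:a_i_deriv_order} for $\abs{\partial^\alpha a_i^*}$, neither of which references the sampled coordinate. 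Therefore a single constant $C$—the common scalar constant, equivalently $\max_\ell C_\ell$—serves all coordinates at once.

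Finally, I would pass from the per-coordinate semi-norm to the vector max-norm. By the definition $\norm{f}_{\C^k(B_{sh}(\hat x))} = \max_{1\leq\ell\leq D}\abs{f^\ell}_{\C^k(B_{sh}(\hat x))}$, each per-coordinate bound becomes $\abs{\partial^\alpha E^\ell_{f, X}(x)}\leq C\norm{f}_{\C^k(B_{sh}(\hat x))} h^{k-\abs{\alpha}}$, which is exactly \eqref{eq:DerivativeApprox}. I do not expect a genuine obstacle here: the argument is a coordinatewise invocation of the already-proven scalar result, and the only care needed is to confirm the $f$-independence (hence coordinate-uniformity) of the constant, which is immediate from how that bound was obtained. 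The hypothesis that $\theta_h$ has vanishing derivatives at the endpoints of its support is what guarantees that the $a_i^*$—and thus the decomposition and the derivative bounds—remain smooth as neighbors enter or leave the support, so the estimate holds throughout $B_{sh}(\hat x)$ rather than merely at $\hat x$.
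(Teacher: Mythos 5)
Your proposal is correct and takes essentially the same route as the paper: both reduce to the scalar case coordinatewise by observing that the coefficients $a_i^*(x)$ are determined solely by the sample sites (so they are shared across all $D$ coordinates), then invoke the scalar bound of Lemma~\ref{lem:MLS_Deriv_scalar} for each $f^\ell$ with a constant independent of $f$, and conclude via the max-norm $\norm{f}_{\C^k(B_{sh}(\hat x))} = \max_\ell \abs{f^\ell}_{\C^k(B_{sh}(\hat x))}$. No gaps; your added remarks on the $f$-independence of the constant and the role of the vanishing endpoint derivatives of $\theta_h$ are consistent with the paper's discussion.
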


\subsection{Manifold Moving Least-Squares (Manifold-MLS)}\label{sec:MMLS}
We now turn to presenting the Manifold-MLS \cite{sober2019manifold}, which generalizes the MLS to approximate submanifolds of $ \RR^D $ based upon scattered data.
Let us begin with the sampling assumptions. 

\subsubsection*{Clean Sampling Assumptions}
\label{sec:CleanSampling}
\begin{enumerate}
	\item $\MM\in \C^k$ ($ k \geq 2 $) is a closed (i.e., compact and boundaryless) submanifold of $\RR^D$.
	\item $R = \{r_i\}_{i=1}^n\subset\MM$ is a quasi-uniform sample set with respect to the domain $\MM$ (see Definition \ref{def:quasi-uniform}); we denote the fill-distance of our sample $h_{R, \MM}$ by $h$ for short.
\end{enumerate}

Given a point $r$ near $\MM$ the Manifold Moving Least-Squares (Manifold-MLS) projection of $r$ is defined through two sequential steps: 
\begin{itemize}
    \item[1.] Find a local $d$-dimensional affine space $(q(r),H(r))$ that approximates the sampled points.
    Explicitly, $H(r)$ is a linear space and the origin is set to $q(r)$ .
    So, $\{q + h | h\in H\}$ is the affine subspace that approximates the sampled points.
    \item[2.] Define the projection of $r$ using a local polynomial approximant $\pi:H\simeq\RR^d \rightarrow \mathbb{R}^{D}$ of $\mathcal{M}$ over the new coordinate system. 
    Explicitly, we denote by $x_i$ the projections of $r_i - q$ onto $H$ and then define the samples of a function $\varphi$ by $\varphi(x_i) = r_i$. 
\end{itemize}

\noindent\textbf{Step 1 - the local coordinate system} \\
Let 
\begin{equation}
   J_1(q, H ~|~ r) = \sum_{i=1}^{n} d(r_i-q , H)^2 \theta_1(\| r_i - q\|) 
,\label{eq:J1def}\end{equation}
where $ \theta_1(t) $ is a fast decaying radial weight function.
We wish to Find a $d$-dimensional affine space $H(r)$, and a point $q(r)$ on $H(r)$, such that 
\begin{equation}
   (q(r),H(r)) = \argmin_{q\in\RR^D,H\in Gr(d,D)} J_1(q, H ~|~ r) 
\label{eq:Step1Minimization}\end{equation}
under the constraints
\begin{enumerate}
\item $r-q \perp H$  \label{init_constraint:perp}
\item $q\in B_{\mu}(r)$ \label{init_constraint:search}
\item $\#\left(R\cap B_{ h}(q)\right) \neq 0$ \label{init_constraint:proximity}
,\end{enumerate}
where $d(r_i - q , H)$ is the Euclidean distance between  $r_i - q$ and the linear subspace $H$, $B_\mu(r)$ is an open ball of radius $\mu$ around $r$ limiting the Region Of Interest (ROI), and $h$ is the fill distance $ h_{\MM, R} $ of the sampling assumptions.
Note, that $ \mu $ must be limited by the injectivity radius of $ \MM $ (i.e., manifold's reach), but we omit the relevant discussions from our paper and instead refer interested readers to \cite{sober2019manifold,sober2017approximation}.

\noindent\textbf{Step 2 - the weighted least-squares} \\
Upon obtaining a local coordinate system we now pronounce the manifold in $B_\mu(r)$, the restricted ROI, as a function $\varphi:H(r)\to\RR^D$. 
The approximation in the local coordinates is performed by means of finding a polynomial $ \pi^*(x)\in\Pi_{k-1}^d $ which minimizes 
\begin{equation}\label{eq:J2def}
    J_2(\pi(x) ~|~ q, H ) = \sum_{i=1}^n \norm{\pi(x_i) - r_i}^2\theta_2(x_i) 
,\end{equation}
where $ x_i = P_{H}(r_i - q) $ is the orthogonal projection of $r_i - q$ onto $H\in Gr(d, D)$, and $ \theta_2(t) $ is a fast decaying radial weight function consistent across scales (i.e.,$ \theta_2(x_i - q) = \theta_h(\norm{x_i - q}) $ and $ \theta_h(th) = \Phi(t) $ ).
That is,
\begin{equation}\label{eq:argmin_step2}
	\pi^*(x ~|~ r) = \argmin_{\pi\in\Pi_{k-1}^d} J_2(\pi(x) ~|~ q(r), H(r))
.\end{equation}
We define the Manifold-MLS projection as
\begin{equation}
\PP_k^h(r) = \pi^*(0 ~|~ r)
.\label{eq:PPdef}\end{equation}

Using this procedure we can now define the approximating manifold as
\begin{equation}\label{eq:defMh}
\Mh = \{\PP_k^h(p) ~|~ \forall p\in \MM\}
,\end{equation}
where $ h = h_{\MM, R} $ is the fill-distance of the sample-set (see Fig. \ref{fig:Mh}).

\begin{figure}[ht]
	\centering
	\includegraphics[scale=0.6]{./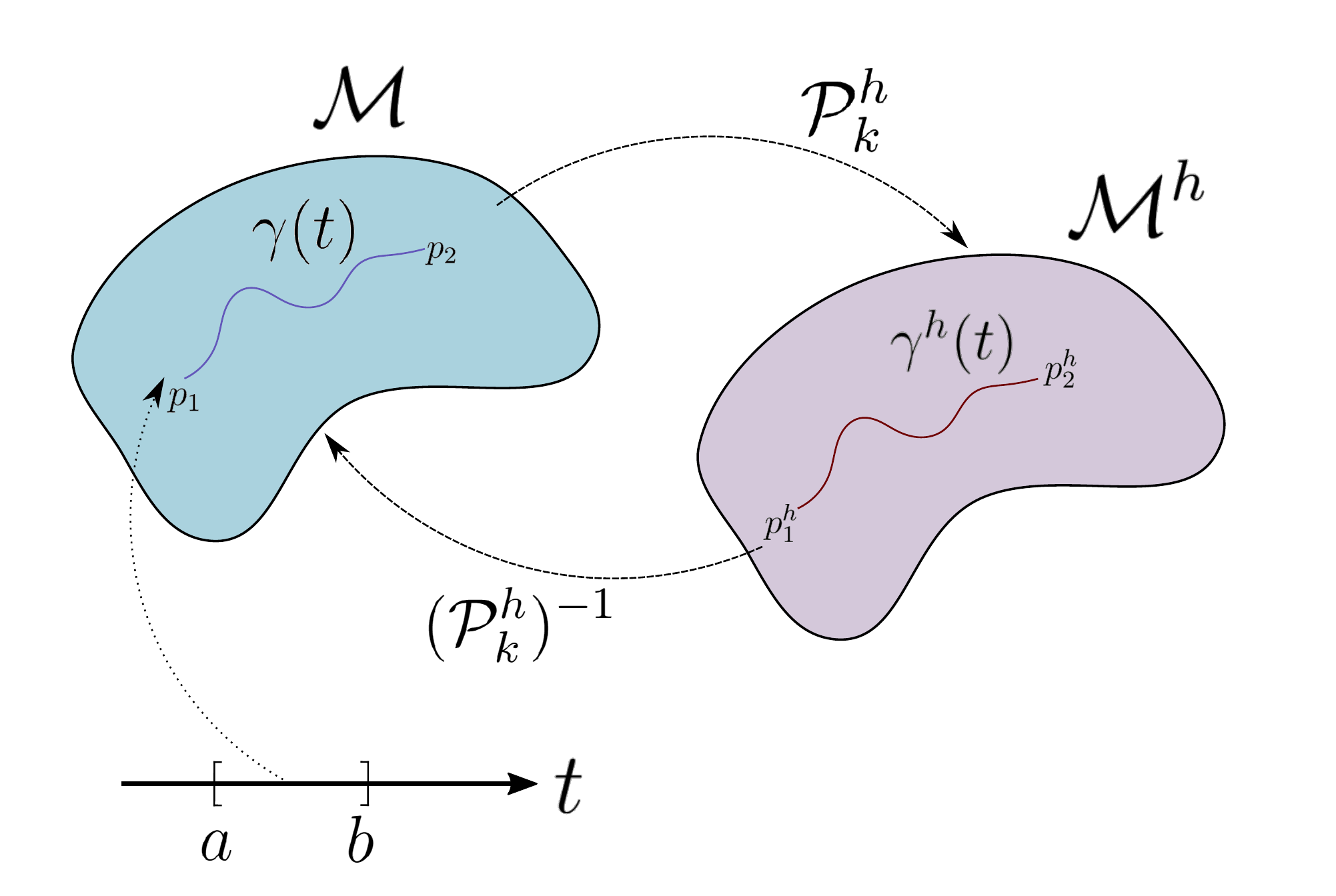}
	\caption{An illustration of a curve $ \gamma^h(t) $ on $ \Mh $ and the corresponding curve $ \gamma_h(t) $ on $ \MM $.}
	\label{fig:Mh}
\end{figure}

As shown in \cite{sober2019manifold}, given that $ \theta_1 , \theta_2 $ are smooth and $h$ small enough (see the Injectivity Conditions of Section \ref{sec:Injectivity}), we achieve that $ \Mh $ is a smooth sub-manifold of $ \RR^D $ and that 
\begin{equation}\label{eq:ConvergenceRate}
	\norm{\Mh - \MM}_{\mathrm{H}} \leq M_0\cdot h^{k}
,\end{equation}
where $ \norm{\cdot}_{\mathrm{H}} $ denotes the Hausdorff norm and $ M $ is some constant.
\begin{remark}
	Note \cite{sober2019manifold} uses the notion of a \hrho sample set rather than a quasi-uniform sample set assumption of Definition \ref{def:quasi-uniform}.
	However, we show in Appendix \ref{sec:Appendix} that these two definitions are the same.
	Therefore, throughout the paper we refer only to the quasi-uniform assumption and use the results of \cite{sober2019manifold} as-is.
\end{remark}
\begin{remark}
    We emphasize that, throughout the exposition of the Manifold-MLS, we have denoted by $h$ the fill distance $h_{R, \MM}$.
    From here on, $h$ will be used in this meaning, and whenever we wish to define a fill-distance with respect to a different sample or a different domain we will denote it explicitly.
\end{remark}

\subsection{Main Result}
Computing geodesic distances on clouds of points are done regularly through finding shortest paths on distance graphs (e.g., by means of Dijkstra algorithm).
Given a limited sample of $ \MM $ the computation of shortest paths may be problematic as shown in Fig. \ref{fig:DiscreteShortestPath_Problem}.
Nonetheless, the Manifold-MLS approximant $ \Mh $ can be sampled in any desired resolution, by taking points $ r $ in a neighborhood of $ \MM $ and computing $ \PP^h_k(r) $.
Thus, the geodesic distances of $ \Mh $ can be attained up to machine precision.
The main contribution of the current paper is in showing that the geodesic distances (as well as paths) computed on $ \Mh $ approximate geodesic distances of $ \MM $ up to $ O(h^{k-1}) $. 
\begin{theorem}\label{thm:MainResult}
	Let the Sampling Assumptions of Section \ref{sec:CleanSampling} hold and let the Injectivity Conditions of Section \ref{sec:Injectivity} hold. 
	Let $\theta_2(x) = \theta_h(\norm{x - q})$ of \eqref{eq:J2def} be such that $\lim_{t\to 0}\theta_h(t) = \infty$ \emph{(}i.e., the Manifold-MLS is interpolatory\emph{)}.
	Let $ \rho_{\MM}, \rho_{\Mh} $ denote the natural distances on $ \MM $ and $ \Mh $ correspondingly, and let $ p_1^h = \PP^h_k(p_1) $ and $ p_2^h = \PP^h_k(p_2) $.
	Then, there exists $h_0$ such that for all $ h \leq h_0 $ we have
	\begin{equation*}
	\rho_{\Mh}(p_1^h,p_2^h) = \rho_{\MM}(p_1,p_2)(1 + O(h^{k-1}))
	\end{equation*}
\end{theorem}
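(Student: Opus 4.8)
The plan is to reduce the statement about geodesic distances to a pointwise estimate on the differential of the Manifold-MLS map $\PP := \PP^h_k$, and then to pass from the pointwise estimate to the distance estimate by the usual length-integration argument. Since $\MM$ carries the metric induced from $\RR^D$ and $\Mh = \PP(\MM)$ does as well, $\rho_\MM(p_1, p_2)$ is the infimum of the lengths $L_\MM(\gamma) = \int \norm{\dot\gamma(t)}\dd t$ over curves $\gamma$ on $\MM$ joining $p_1$ to $p_2$, and $\rho_{\Mh}(p_1^h, p_2^h)$ is the analogous infimum over curves on $\Mh$. Because $\PP$ is a diffeomorphism from $\MM$ onto $\Mh$ (Sober--Levin), every curve on $\Mh$ is $\PP\circ\gamma$ for a unique curve $\gamma$ on $\MM$, and $L_{\Mh}(\PP\circ\gamma) = \int \norm{d\PP_{\gamma(t)}(\dot\gamma(t))}\dd t$. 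Hence the theorem follows once I establish a \emph{uniform} two-sided bound
\[
(1 - C h^{k-1})\norm{v} \;\le\; \norm{d\PP_p(v)} \;\le\; (1 + C h^{k-1})\norm{v},
\qquad p\in\MM,\ v\in T_p\MM,
\]
with $C$ independent of $p$, $v$, and $h$: integrating it along any curve gives $L_{\Mh}(\PP\circ\gamma) = L_\MM(\gamma)(1 + O(h^{k-1}))$, and taking infima over the bijectively paired families of curves yields the claimed equality of distances.

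The heart of the argument is therefore the differential bound, i.e.\ that $d\PP$ is $O(h^{k-1})$-close to an isometric embedding of each tangent space. To see this I would work in the moving local coordinate system $(q(r), H(r))$ supplied by Step~1 of the construction. Fixing a base point $p$ and its chart $H\simeq\RR^d$, the manifold $\MM$ is locally the graph of a $\C^k$ parametrization $\varphi:H\to\RR^D$ (with $\varphi(x_i) = r_i$), while $\Mh$ is, in the same chart and up to the higher-order corrections tracked below, the graph of the vector-valued MLS approximant $s_{\varphi, X}$ of $\varphi$. The first fundamental forms are $g_{ij} = \langle \partial_i\varphi, \partial_j\varphi\rangle$ and $g^h_{ij} = \langle \partial_i s_{\varphi,X}, \partial_j s_{\varphi,X}\rangle$. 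Lemma~\ref{lem:MLS_Deriv} with $\abs{\alpha} = 1$ gives $\norm{\partial_i s_{\varphi, X} - \partial_i \varphi} \le C\norm{\varphi}_{\C^k} h^{k-1}$ coordinate-wise, and since $\varphi$ and its first derivatives are uniformly bounded on the compact manifold, this yields $g^h_{ij} = g_{ij} + O(h^{k-1})$ as symmetric matrices. Comparing the two quadratic forms produces exactly the two-sided bound above for the corresponding tangent vectors.

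The delicate point, and the step I expect to be the main obstacle, is that $\PP$ is not literally the single-chart MLS approximant $s_{\varphi, X}$: the coordinate frame $(q(r), H(r))$ itself \emph{moves} with the evaluation point $r$, so differentiating $\PP$ along $\MM$ produces extra terms coming from $dq$ and $dH$. I would handle this by showing that the assignment $r\mapsto(q(r), H(r))$ is $\C^{k-1}$ and that, because $(q,H)$ is determined by a least-squares problem whose minimizer is high-order close to the true tangent data of $\MM$ at $p$ (this is where the Injectivity Conditions and the interpolatory weight $\theta_h$ enter, pinning the leading order of the frame to that of $\MM$), the frame-variation terms are themselves $O(h^{k-1})$ and can be absorbed into the constant $C$.

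Establishing the uniformity of all these constants over $\MM$ is the remaining technical burden: it follows from compactness (a finite cover by charts), the $h$-independence of the neighbor count $N = \rho s^d$ and of the MLS coefficient bounds noted after \eqref{eq:a_i_deriv_order}, and the uniform $\C^k$ bounds available on the compact manifold. Once the differential estimate holds uniformly, the length and infimum arguments of the first paragraph close the proof, and choosing $h_0$ small enough that $C h_0^{k-1} < 1$ guarantees that $\PP$ is a genuine bi-Lipschitz diffeomorphism, so that the two families of infima are comparable and neither degenerates.
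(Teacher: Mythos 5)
Your overall skeleton (reduce to a uniform two\-/sided bound on $d\PP_p$, then integrate along curves and use that $\PP$ is a diffeomorphism to compare the two infima) is a valid reduction, and the intermediate claim you want is in fact true. But the core of your argument is asserted rather than proved, and the justification you offer for it does not work. Concretely: in your second paragraph you treat $\Mh$, in the frozen chart $H(p)$, as the graph of the MLS approximant $s_{\varphi,X}$, and in your third paragraph you claim the moving\-/frame contributions to $d\PP$ are $O(h^{k-1})$ because $(q(r),H(r))$ is ``high\-/order close'' to the tangent data of $\MM$. Neither statement is supported. The frame is only $O(h)$\-/close to the tangent (this is what Lemma 4.4 of \cite{sober2019manifold} gives, as used in Lemma \ref{lem:Projected_h}), and, more fundamentally, closeness of the frame to the tangent does not control the quantity you actually need: writing $\PP(r)=\mu_r(r)$ with $\mu_r=\varphi^h_r\circ\varphi_r^{-1}$, the frame\-/variation term in $d\PP_p$ is $\partial_r\big[\mu_r(p)\big]\big|_{r=p}$, i.e.\ the derivative \emph{with respect to the base point} of the $O(h^k)$ graph\-/gap between $\MM$ and $\Mh$. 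A quantity that is $O(h^k)$ pointwise can a priori have an $O(1)$ derivative in $r$; bounding it by $O(h^{k-1})$ uniformly in $h$ is precisely the missing lemma, and nothing in your sketch produces it. Relatedly, $\Mh$ is locally the graph of $\varphi^h_p$ (Lemma \ref{lem:1to1}), not of $s_{\varphi,X}$, and identifying the two ``up to higher\-/order corrections'' in first derivatives is again exactly what must be proved.

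The paper's proof is structured to avoid ever differentiating $\PP$ (equivalently, the frame assignment). It uses the interpolatory hypothesis in a way quite different from yours: since $\lim_{t\to 0}\theta_h(t)=\infty$, both $\varphi_p$ and $\varphi^h_p$ interpolate the \emph{same} data $r_i$, hence have the \emph{same} MLS approximant $s_{\varphi,X}=s_{\varphi^h,X}$; applying Lemma \ref{lem:MLS_Deriv} to each and using the triangle inequality gives $\norm{\partial_v\varphi_p-\partial_v\varphi^h_p}=O(h^{k-1})$ (Lemma \ref{lem:MMLS_Deriv_Apprxoximation}), whence the \emph{frozen\-/frame} map $\mu_p$ has differential $Id+O(h^{k-1})$ (Corollary \ref{cor:LocalIsometry}). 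This is the rigorous version of your fundamental\-/form comparison, but it holds only for $\mu_p$, not for $\PP$: indeed $\mu_{z_j}(z_{j+1})\neq\PP(z_{j+1})$. The paper then handles the global statement by chopping a geodesic into $J=O(1/h)$ arcs, pushing each forward by its own $\mu_{z_j}$, and splicing consecutive images with geodesic bridges of length $O(h^k)$ each (Lemma \ref{lem:eps_geodesic_bound}, using \eqref{eq:pi_est}); the accumulated bridge cost $J\cdot O(h^k)=O(h^{k-1})$ preserves the rate, and symmetry gives the reverse inequality. So the frame\-/mismatch enters only through point values, never through derivatives. To repair your proposal you must either prove the missing bound on the frame\-/variation term (a genuinely new technical lemma, not a compactness remark) or restructure the global step along these segment\-/and\-/bridge lines.
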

The complete proof of this theorem can be found in Section \ref{sec:proof} below.

\section{Proof of Theorem \ref{thm:MainResult}}\label{sec:proof}

\subsection{The Approximate Isometry}
As explained in Section \ref{sec:MMLS} the first step of the Manifold-MLS is aimed at finding a local coordinate system $ (q, H) $ by which we can describe the manifold locally as the image of a function $ \varphi:H\simeq\RR^d\to\RR^D $.
In other words, $ \varphi(x) $ can be viewed as a local parametrization of the manifold. 
Then, in the second step of the Manifold-MLS we perform locally weighted least-squares polynomial estimation of this function $ \varphi $.
Using this local polynomial estimation of the local parametrization, we show that the metric tensor of $\Mh$ resembles the one of $\MM$. 

\paragraph{The one-to-one relationship:}
Let $ p^h $ be some point in $ \Mh $.
Then, by its construction, we know that there exists $ p_h\in\MM $ such that $ \PP_k^h(p_h) = p^h $ along with the corresponding coordinate domain $ (q(p_h), H(p_h)) $.
In Lemma \ref{lem:1to1} below, we show the existence of $ \varphi_{p_h}:H(p_h)\to\MM $ such that $ \varphi_{p_h}(0) = p_h $.
Furthermore, we also show that if the Injectivity Conditions presented in \cite{sober2019manifold} hold, then this is a one-to-one correspondence.
Translating these conditions to our sampling case we get
\subsubsection*{Injectivity Conditions}
\label{sec:Injectivity}
\begin{enumerate}
	\item The functions $\theta_1(t)$ and $\theta_h(t)$  of \eqref{eq:Step1Minimization} and \eqref{eq:argmin_step2} are monotonically decaying and supported on $[0, c_1 h]$, where $c_1>3$.
	\item Suppose that $\theta_1(c_2h)>c_3>0$ and $\theta_h(c_2h)>c_3>0$, for some constant $c_2< c_1$.
	\item Set $\mu = rch(\MM)/2 $ in constraint \ref{init_constraint:search} of \eqref{eq:Step1Minimization}, where $ rch(\MM) $ denotes the manifold's reach (i.e., the injectivity radius; see \cite{federer1959curvature}).
\end{enumerate}

\begin{lemma}\label{lem:1to1}
	Let the Sampling Assumptions of Section \ref{sec:CleanSampling} and the Injectivity Conditions of Section \ref{sec:Injectivity} hold.
	Then, for all $ h\leq h_0 $ and all $ p\in \MM $ we have:
	\begin{enumerate}
		\item There exists a unique $ p^h\in\Mh $ and a unique coordinate domain $ (q(p), H(p))\in \RR^D\times Gr(d,D) $.
		\item There exist neighborhoods $ W_p\subset\MM, U\subset H(p) $ of $  p, q(p) $ correspondingly such that
		\begin{align*}
		&\varphi_{p}:U\to \MM\subset\RR^D, \quad \varphi^h_{p}:U\to \Mh\subset\RR^D\\
		\end{align*}
		and 
		\begin{align*}
		&\varphi_{p}[U] = W_p\subset\MM, \quad \varphi^h_{p}[U] = W^h\subset\Mh\\
		.\end{align*}
		\item Furthermore, 
		\begin{align*}
		&\varphi_{p}(0) = p, \quad
		\varphi^h_{p}(0) = \pi^*(0 ~|~ p) = p^h
		\end{align*}
	\end{enumerate}
\end{lemma}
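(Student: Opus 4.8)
The plan is to handle the three assertions in order, leaning on the injectivity analysis of \cite{sober2019manifold} for the uniqueness claim in item~1 and then producing the two parametrizations explicitly: $\varphi_p$ as a graph map of $\MM$ over $H(p)$, and $\varphi^h_p$ as the composition of that graph map with the Manifold-MLS projection $\PP^h_k$.

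First I would establish item~1. Under the Injectivity Conditions the Step~1 functional $J_1(q, H \mid p)$ of \eqref{eq:J1def} admits, for $h \leq h_0$, a unique constrained minimizer $(q(p), H(p))$; this is precisely the content of the injectivity analysis of \cite{sober2019manifold}, once the \hrho hypotheses used there are replaced by our quasi-uniform assumption, the equivalence being established in Appendix~\ref{sec:Appendix}. Since $R$ is quasi-uniform and $\theta_h$ is bounded below on its support by condition~2, for $h$ small enough the samples of $R$ falling in the support of $\theta_h$ about $q(p)$ form a $\Pi_{k-1}(\RR^d)$-unisolvent set, so the Step~2 least-squares polynomial $\pi^*(\cdot \mid p)$ is unique and hence $p^h = \pi^*(0 \mid p)$ is uniquely determined. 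Because $\MM$ is compact, a single threshold $h_0$ can be taken uniformly in $p$.

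Next I would build $\varphi_p$. The key geometric input is that, as $h \to 0$, the best-fit affine space $H(p)$ converges to the tangent space $T_p\MM$ and $q(p) \to p$; in particular the angle between $H(p)$ and $T_p\MM$ is $o(1)$, so for $h$ small $H(p)$ is transversal to $\MM$ at $p$. The orthogonal projection $P_{H(p)} \colon \MM \to H(p)$ then has differential at $p$ close to the identity, and the inverse function theorem yields a neighborhood $W_p \subset \MM$ of $p$ mapped diffeomorphically onto a neighborhood $U \subset H(p)$ of $q(p)$. I would define $\varphi_p \defeq \left( P_{H(p)} \big|_{W_p} \right)^{-1} \colon U \to W_p$; since $p - q(p) \perp H(p)$ by constraint~\ref{init_constraint:perp}, the point $p$ projects to the origin of $H(p)$, so $\varphi_p(0) = p$, which is the first half of item~3. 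Note that $\varphi_p$ is exactly the local graph parametrization of $\MM$ over $H(p)$ that the Step~2 polynomial approximates, which is what later links this construction to Lemma~\ref{lem:MLS_Deriv}.

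Finally, for the approximant side I would set $\varphi^h_p \defeq \PP^h_k \circ \varphi_p \colon U \to \Mh$, so that $W^h \defeq \varphi^h_p[U] = \PP^h_k[W_p]$ and $\varphi^h_p(0) = \PP^h_k(\varphi_p(0)) = \PP^h_k(p) = \pi^*(0 \mid p) = p^h$, which gives item~2 together with the remaining half of item~3. That $W^h$ is a genuine neighborhood of $p^h$ in $\Mh$ and that $\varphi^h_p$ is injective both follow from the fact, proved in \cite{sober2019manifold}, that $\PP^h_k$ restricts to a diffeomorphism from $\MM$ onto $\Mh$, since composing two injective maps preserves injectivity. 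I expect the main obstacle to be the geometric perturbation estimate controlling the angle between $H(p)$ and $T_p\MM$ uniformly in $p$, because this is exactly what guarantees transversality and hence the local diffeomorphism property of $P_{H(p)}$; the uniqueness in item~1 is essentially imported from \cite{sober2019manifold}, so the genuine work lies in making the graph construction valid with constants independent of the base point, which uses the compactness of $\MM$ together with the reach bound $\mu = rch(\MM)/2$.
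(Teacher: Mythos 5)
Your treatment of item 1 and of the $\MM$-side parametrization coincides with the paper's own proof: the paper likewise imports uniqueness of $(q(p),H(p))$ and injectivity of $\PP^h_k$ from Lemmas 4.13 and 4.18 of \cite{sober2019manifold}, and builds $\varphi_p$ as the graph of $\MM$ over $H(p)$ using the convergence $H(p)\to T_p\MM$ (Lemma 4.4 there); your inverse-function-theorem argument simply makes explicit a step the paper asserts, and your use of constraint \ref{init_constraint:perp} to get $\varphi_p(0)=p$ is identical. Where you genuinely diverge is the $\Mh$-side map: the paper defines $\varphi^h_p$ as the \emph{graph} parametrization of $\Mh$ over the \emph{same} coordinate system, $\varphi^h_p(x) = q(p) + x + \phi^h_p(x)$ with $\phi^h_p(x)\in H(p)^\perp$, in exact structural parallel to \eqref{eq:phi_p}, whereas you set $\varphi^h_p \defeq \PP^h_k\circ\varphi_p$. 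Your definition does satisfy the literal statement of the lemma (indeed it delivers $\varphi^h_p(0)=p^h$ exactly, by construction, which is arguably cleaner than the paper's justification of that point), so as an isolated proof of this statement it is acceptable.

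However, the choice is consequential, because this lemma is infrastructure and the object $\varphi^h_p$ is reused downstream with the graph-map meaning. The graph structure is what makes $\varphi_p$ and $\varphi^h_p$ two functions over the same fixed domain $H(p)$ which, in the interpolatory case, agree at the sites $x_i$; this is what lets the flat MLS derivative theory (Lemma \ref{lem:MLS_Deriv}) be applied to both in the proof of Lemma \ref{lem:MMLS_Deriv_Apprxoximation}, with $\C^k$ bounds on $\varphi^h_p$ coming from the geometry of $\Mh$ alone. It also yields the identity $P_{H(z_j)}\bigl(\varphi^h_{z_j}(x) - q(z_j)\bigr) = x$, which is exactly what Lemma \ref{lem:eps_geodesic_bound} uses when it asserts $P_{H(z_j)}(\tilde z^h_{j+1}) = P_{H(z_j)}(z_{j+1})$ and decomposes $\varphi^h_{z_j}$ into its normal component $\phi^h_{z_j}$. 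With your definition these properties fail: $\PP^h_k\circ\varphi_p$ is not a graph over $H(p)$, the map $\mu_p = \varphi^h_p\circ\varphi_p^{-1}$ collapses to $\PP^h_k|_{W_p}$, and bounding its differential (or the $\C^k$ norm of your $\varphi^h_p$) then requires differentiating $\pi^*(0\mid\cdot)$ through the \emph{moving} coordinate system $(q(\cdot),H(\cdot))$ --- control over derivatives with respect to the base point, which the paper's machinery (which fixes $p$ and differentiates $\pi^*(x\mid p)$ in $x$ only, as emphasized in Lemma \ref{lem:MMLS_Deriv_Apprxoximation}) never provides and is deliberately structured to avoid. So keep the graph-map construction of $\varphi^h_p$; your pushforward version proves the lemma but would stall the rest of the paper.
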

\begin{proof}
	Under the Sampling Assumptions and the Injectivity Conditions, the conditions for Lemmas 4.13 and 4.18 of \cite{sober2019manifold} hold.
	Therefore, we know that both $ (q(p), H(p)):\MM\to \RR^D\times Gr(d,D) $ and $ \PP_k^h:\MM\to\Mh $ are injective. 
	This proves item 1 of the lemma.
	From Lemma 4.4. of \cite{sober2019manifold} we know that $ H(p) $ converges to $ T_{p}\MM $ as $ h\to 0 $.
	Thus, there exists a small enough $ h $ such that locally $ \MM $ is the image of the following function:
	\begin{align}
	\begin{split}
	\varphi_{p}:H(p)&\to W_p\subset\MM\\
	x&\mapsto \underbrace{q(p)}_{\text{origin}} + \underbrace{x}_{\text{``tangential"}} + \underbrace{\phi_{p}(x)}_{\text{``normal"}}
	\end{split},\label{eq:phi_p}\end{align}
	where $ \phi_{p}(x)\in H(p)^\perp $.
	The function $ \varphi_{p} $ comprises three parts: a shift to the local origin $ q(p) $, a movement along $ H(p) $ (this is what we mean by ``tangential") and a movement along $ H(p)^\perp $ (normal direction to $ H(p) $); see Fig. \ref{fig:SkewedH} for an illustration.
	
	In addition, since $ \Mh $ is constructed by a local polynomial above $ H $, which is smoothly varying, we can create $ \varphi_{p}^h $, a parametrization from $ H $ to $ \Mh $ with similar components.
	
	\begin{figure}[ht]
		\centering
		\includegraphics[width=\linewidth]{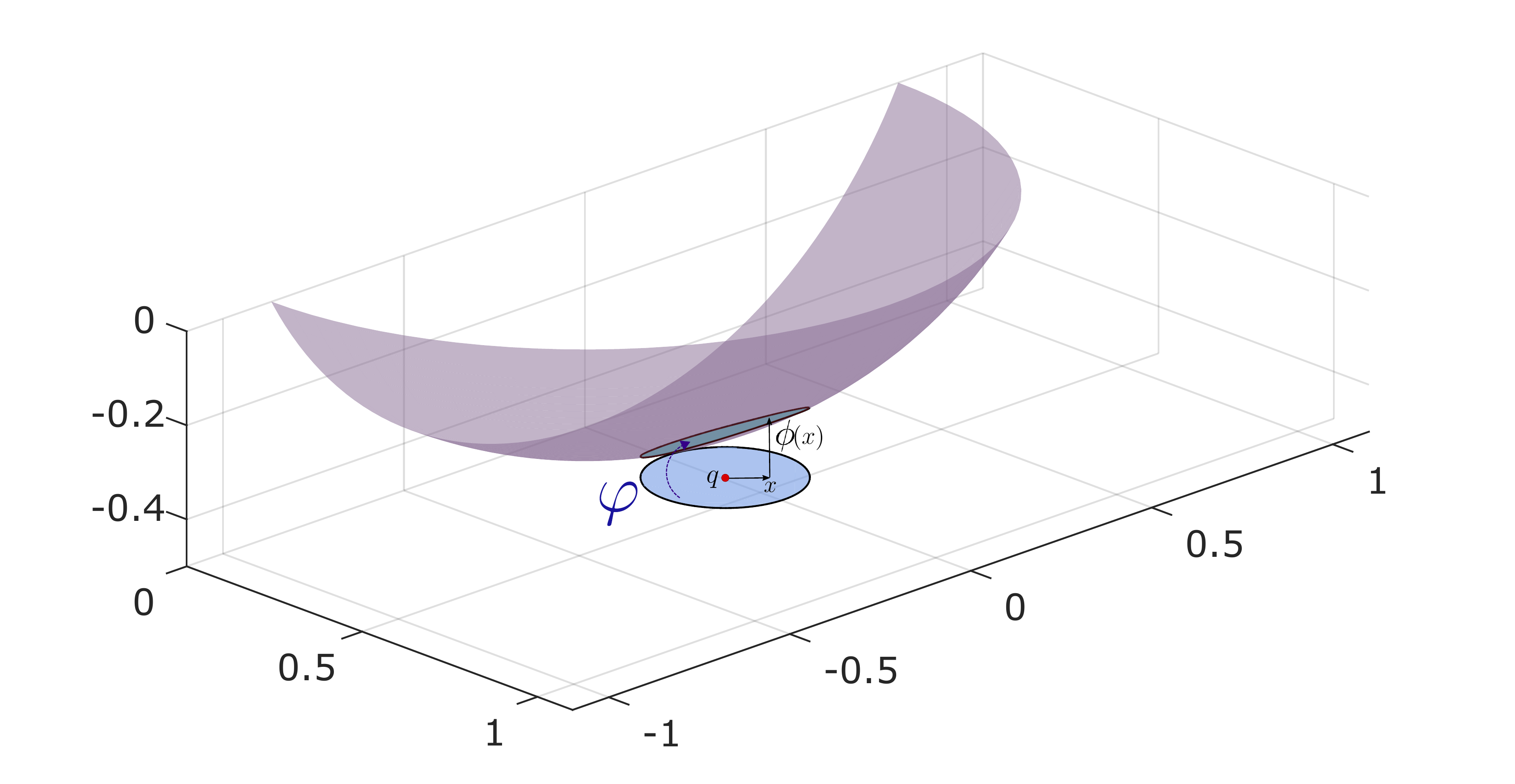}
		\caption{Illustration of $ \varphi_{p} $: The $ xy $-plane is $ H(p) $; The local origin $ q(p) $, which is mapped by $ \phi_{p} $ to $ p $ is marked by the red dot; the vector $ x $ represents a ``tangential" movement; and $ \phi_{p}(x) $ is a normal movement. }
		\label{fig:SkewedH}
	\end{figure}

	From Constraint \ref{init_constraint:perp} of \eqref{eq:Step1Minimization} we know that 
	\[
		p - q(p) \perp H(p)
	.\]
	Thus, the tangential component is null and we have
	\[
	\varphi_{p}(0) = p
	.\]
	Furthermore from \eqref{eq:PPdef} we know that
	\[
	\pi^*(0 ~|~ p) = p^h
	.\]
	Thus, we achieve Results 2 and 3 as required. 
\end{proof}
In other words, under the Injectivity Conditions, $ \pi^*(x ~|~ p) $ is a local weighted polynomial least-squares approximation to $ \varphi_{p} $.
Thus, from Lemma \ref{lem:MLS_Deriv} we get that for $ x\in B_{sh}(0)\subset H(p) $
\begin{equation}
\norm{\varphi_{p}^h(x) - \varphi_{p}(x)}_\infty \leq c h^{k}\norm{\varphi_{p}}_{\C^k(B_{sh}(0))}
,\end{equation}
where 
\[
\norm{\varphi_{p_h}}_{\C^k(B_{sh}(0))} = \max_{\begin{subarray}{c} \abs{\alpha} = k \\ x\in B_{sh}(0)	\end{subarray}} \abs{\partial^\alpha \varphi_{p_h}(x)} 
.\] 
Furthermore, as $ \MM $ is compact and the dependency of $\varphi_{p}$ on $p$ is smooth (resulting from the fact that $H(p)$ vary smoothly -- see Theorem 4.11 in \cite{sober2019manifold}) we can bound $ \norm{\varphi_{p}}_{\C^k(B_{sh}(0))} $ independently of $ p\in \MM $ by a constant and get
\begin{equation}\label{eq:pi_est}
\norm{\varphi_{p_h}^h(x) - \varphi_{p_h}(x)}_\infty \leq C_0 h^{k}
,\end{equation}
for all $ x\in B_{sh}(0) $.
Note, that this implies that
\[
\norm{p^h - p}_\infty = \norm{\varphi_{p}^h(0) - \varphi_{p}(0)}_\infty  \leq C h^{k}
.\]
In Lemma \ref{lem:MMLS_Deriv_Apprxoximation} below, we extend this result to the derivatives as well.
Namely, we show that for $ x\in B_{sh}(0) $
\[
\norm{\partial_{v}\varphi_{p_h}(x) - \partial_{v}\varphi_{p}^h(x)}_\infty \leq C_1 h^{k-1}
.\]

\begin{lemma}\label{lem:MMLS_Deriv_Apprxoximation}
    	Let the Sampling Assumptions of Section \ref{sec:CleanSampling} and the Injectivity Conditions of Section \ref{sec:Injectivity} hold, and let $\theta_2(x) = \theta_h(\norm{x - q})$ of \eqref{eq:J2def} be such that $\lim_{t\to 0}\theta_h(t) = \infty$ \emph{(}i.e., $ \Mh $ interpolates $ \MM $ at the samples $ r_i $\emph{)}.
	Then, there exists a constant $h_0$ such that for all $ h\leq h_0 $, all $ p\in \MM $, any direction $v\in\RR^d$, and all $ x\in B_{sh}(0)\subset H(p) $
\begin{align}\label{eq:pi_derivative_est}
\begin{split}
	\norm{\partial_{v}\varphi_{p}(x) - \partial_{v}\varphi_{p}^h(x)}_\infty &\leq C_1 h^{k-1}, \\
	\norm{\partial_{v}\pi^*(x ~|~ p) - \partial_{v}\varphi_{p}(x)}_\infty &\leq C_2 h^{k-1}\\
	\norm{\partial_{v}\pi^*(x ~|~ p) - \partial_{v}\varphi_{p}^h(x)}_\infty &\leq C_3 h^{k-1}
,\end{split}
\end{align}
where $C_1, C_2, C_3$ are constants independent of $v$ or $p$, and $\partial_{v}\pi^*(\hat x ~|~ p) = \at{\dxj{v}}{x = \hat x}\pi^*(x ~|~ p)$ denotes the directional derivative of the polynomial in the direction $v$ with respect to the first input evaluated at $\hat x \in B_{sh}(0)$; i.e., taking $p$ to be constant.
\end{lemma}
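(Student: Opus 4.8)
The plan is to reduce all three estimates of \eqref{eq:pi_derivative_est} to statements about the \emph{normal} components of the three parametrizations, prove only two of them directly, and deduce the third by the triangle inequality. By Lemma \ref{lem:1to1} and \eqref{eq:phi_p}, for $h$ small both $\MM$ and $\Mh$ are, in a neighborhood of $p$ and $p^h$, graphs over the common affine space $(q(p),H(p))$; moreover the fixed polynomial $\pi^*(\cdot\mid p)$ reproduces affine maps exactly (its degree is $k-1\geq 1$), so its $H(p)$-component is again $q(p)+x$. Hence one may write
\[
\varphi_{p}(x)=q(p)+x+\phi_{p}(x),\quad \pi^*(x\mid p)=q(p)+x+\psi_{p}(x),\quad \varphi^h_{p}(x)=q(p)+x+\phi^h_{p}(x),
\]
with $\phi_{p},\psi_{p},\phi^h_{p}$ taking values in $H(p)^\perp$. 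The common tangential part $q(p)+x$ has constant derivative, so each difference in \eqref{eq:pi_derivative_est} equals $\partial_v$ of a difference of these normal parts, and it suffices to bound $\norm{\partial_v(\psi_{p}-\phi_{p})}_\infty$ and $\norm{\partial_v(\phi^h_{p}-\phi_{p})}_\infty$ on $B_{sh}(0)$, the third line following from $\partial_v(\psi_p-\phi^h_p)=\partial_v(\psi_p-\phi_p)-\partial_v(\phi^h_p-\phi_p)$.

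For the second line of \eqref{eq:pi_derivative_est} I would invoke the moving least-squares derivative estimate. The normal part $\psi_{p}$ is the weighted least-squares polynomial of degree $\leq k-1$ fitting the $\C^k$ map $\phi_{p}$, so Lemma \ref{lem:MLS_Deriv} (with $\abs{\alpha}=1$, applied coordinatewise) already yields the $O(h^{k-1})$ rate for the moving approximant; to obtain it for the \emph{fixed-base} polynomial derivative appearing in \eqref{eq:pi_derivative_est} I would compare both $\psi_{p}$ and the degree-$(k-1)$ Taylor polynomial $T$ of $\phi_{p}$ at $0$. Taylor's theorem gives $\norm{\partial_v(T-\phi_{p})}_\infty=O(h^{k-1})$ on $B_{sh}(0)$, while the $\C^0$ bound from Lemma \ref{lem:MLS_Deriv} with $\abs{\alpha}=0$ together with $\norm{\phi_{p}-T}_\infty=O(h^k)$ gives $\norm{\psi_{p}-T}_\infty=O(h^k)$; since $\psi_{p}-T$ is a polynomial of fixed degree on a ball of radius $sh$, a Markov-type inverse inequality gains a factor $h^{-1}$ and yields $\norm{\partial_v(\psi_{p}-T)}_\infty=O(h^{k-1})$. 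Summing the two gives $\norm{\partial_v(\psi_{p}-\phi_{p})}_\infty=O(h^{k-1})$.

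The main obstacle is the first line of \eqref{eq:pi_derivative_est}, which concerns the genuine parametrization $\varphi^h_{p}$ of the approximant $\Mh$ rather than a polynomial patch, and is where the interpolatory hypothesis $\lim_{t\to 0}\theta_h(t)=\infty$ enters. The $\C^0$ control \eqref{eq:pi_est} and the Hausdorff bound \eqref{eq:ConvergenceRate} only give $\norm{\phi^h_{p}-\phi_{p}}_\infty=O(h^k)$, and upgrading this to an $O(h^{k-1})$ bound on the first derivative requires a uniform higher-order regularity bound on $\phi^h_{p}$, so that the $\C^0$ error may be differentiated at the cost of a single power of $h$. I would obtain such a bound from the smoothness of the Manifold-MLS construction -- the $h$-uniformly controlled dependence of the frame $(q(p),H(p))$ and of the MLS coefficients on the base point (Theorem 4.11 of \cite{sober2019manifold}) -- which makes $\varphi^h_{p}$, built from the $\C^{k-1}$ map $\PP_k^h$, a map with bounded derivatives; the interpolatory property then forces $\phi^h_{p}$ and $\phi_{p}$ to agree on the $O(h)$-dense sample set, so that a standard interpolation inequality (equivalently, a direct estimate of $d\PP_k^h-\mathrm{Id}$) converts the $\C^0$ agreement into $\norm{\partial_v(\phi^h_{p}-\phi_{p})}_\infty=O(h^{k-1})$. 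Compactness of $\MM$ and the smooth, $h$-uniform dependence on $p$ render all constants independent of $p$ and of the direction $v$, and the triangle inequality then delivers the third line of \eqref{eq:pi_derivative_est}, completing the proof.
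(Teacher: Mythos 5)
Your overall architecture is sound and close to the paper's in spirit: both hinge on the interpolatory hypothesis forcing $\varphi_p$ and $\varphi^h_p$ to take the same values $r_i$ at the projected sample sites, both need uniform (in $p$ and $h$) $\C^k$ bounds on $\varphi_p$ and $\varphi^h_p$ from compactness and Theorem 4.11 of \cite{sober2019manifold}, and your treatment of the second bound (compare the fixed polynomial to the Taylor polynomial of $\phi_p$, use a Markov inverse inequality on the polynomial difference over a ball of radius $sh$) together with deducing the third bound by the triangle inequality is legitimate -- in fact more explicit than the paper, which disposes of the second and third bounds with a brief remark that $\pi^*(\cdot\,|\,p)$ is the fixed MLS polynomial and reproduces the Taylor polynomial. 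Two smaller caveats: your $\C^0$ bound $\norm{\psi_p-\phi_p}_\infty=O(h^k)$ cannot literally be "Lemma \ref{lem:MLS_Deriv} with $\abs{\alpha}=0$", since that lemma concerns the \emph{moving} approximant $s_{f,X}$, not the fixed polynomial $\pi^*(\cdot\,|\,p)$; you need least-squares optimality against the Taylor polynomial plus a norming-set (stability) argument. And you silently assume the projected samples are $O(h)$-dense in $B_{sh}(0)$; the paper proves this separately (Lemma \ref{lem:Projected_h}).

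The genuine gap is in your first bound, which is the crux of the lemma. You propose that, since $\phi^h_p-\phi_p$ vanishes on an $O(h)$-dense set and has bounded derivatives, "a standard interpolation inequality" upgrades $\C^0$ agreement to an $O(h^{k-1})$ bound on first derivatives. A standard two-norm interpolation (Landau--Kolmogorov type) inequality on a ball of radius $sh$ gives, for $e=\phi^h_p-\phi_p$,
\begin{equation*}
\norm{\partial_v e}_\infty \;\lesssim\; h^{-1}\norm{e}_\infty + h\,\norm{e}_{\C^2} \;=\; O(h^{k-1}) + O(h),
\end{equation*}
and the $O(h)$ term, controlled only by the uniform $\C^2$ bound, dominates whenever $k>2$; so this route does not give the claimed rate. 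What is actually needed is an order-$k$ sampling inequality for functions with scattered zeros (agreement on an $h$-dense set plus a uniform $\C^k$ bound implies $O(h^{k-1})$ gradient error), which is a nontrivial theorem, not an elementary interpolation estimate. The paper closes exactly this hole with a self-contained trick: because $\varphi_p(x_i)=r_i=\varphi^h_p(x_i)$, the two functions have the \emph{same} MLS approximant $s_{\varphi,X}=s_{\varphi^h,X}=\sum_i a_i^*(\cdot)\,r_i$, so Lemma \ref{lem:MLS_Deriv} applied once to $\varphi_p$ and once to $\varphi^h_p$, followed by the triangle inequality through the common approximant, yields $\norm{\partial_v\varphi_p-\partial_v\varphi^h_p}_\infty \leq C_1h^{k-1}$ directly. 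Your parenthetical fallback -- "a direct estimate of $d\PP_k^h-\mathrm{Id}$" -- is circular: that estimate is Corollary \ref{cor:LocalIsometry}, which is a consequence of this very lemma.
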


\begin{proof}
Let us fix an arbitrary point $p\in \MM$. 
According to Lemma \ref{lem:1to1} we have the local coordinate system $(q, H) = (q(p), H(p))$ and the local functions $\varphi_{p}:H\to\MM$ and $\varphi_{p}^h:H\to\Mh$, defined on some open ball around the origin $B_0(s)\subset H\simeq \RR^d$.
Furthermore, we define the sample sites on $H$ by means of
\[
X = \{x_i = P_H(r_i - q) ~|~ \norm{r_i - p} \leq \mu\}_{i=1}^n\cap B_0(sh)
,\]
and by Lemma \ref{lem:Projected_h} we know that $h_{X, B_0(sh)} \leq c \cdot h$ for some constant $c$.

Under these settings we can now define $s_{\varphi, X}(x), s_{\varphi^h, X}(x)$, the MLS approximants of $\varphi_{p}$ and $\varphi_{p}^h$ respectively. 
As we know that $\lim_{t\to 0}\theta_h(t) = \infty$, we get that the Manifold-MLS is interpolatory (see Section 3.2 in \cite{sober2017approximation}) and so
\[
\varphi_{p}(x_i) = r_i = \varphi_{p}^h
.\]
Therefore, we get that 
\[
s_{\varphi, X}(x) = s_{\varphi^h, X}(x) = \sum_{x_i\in X} a_i^*(x) r_i
.\]
Using Lemma \ref{lem:MLS_Deriv}, we get that for $ x\in B_{sh}(0) $
\[
\norm{\partial_{v}s_{\varphi, X}(x) - \partial_{v}\varphi_{p}(x)}_\infty \leq \tilde c_1\norm{\varphi_{p}}_{\C^k(B_{sh}(0))} h^{k-1}
,\]
and
\[
\norm{\partial_{v}s_{\varphi^h, X}(x) - \partial_{v}\varphi_{p}^h(x)}_\infty = \norm{\partial_{v}s_{\varphi, X}(x) - \partial_{v}\varphi_{p}^h(x)}_\infty \leq \tilde c_2 \norm{\varphi_{p}^h}_{\C^k(B_{sh}(0))} h^{k-1}
,\]
where 
\[
 \norm{f}_{\C^k(B_{sh}(0))} \defeq \max_{1 \leq \ell \leq D}\abs{f^\ell}_{\C^k(B_{sh}(0))} = \max_{1 \leq \ell \leq D} \max_{\begin{subarray}{c}
 	\abs{\alpha} = k \\ x\in B_{sh}(0)
 	\end{subarray}} \abs{ \partial^\alpha f^\ell(x)}
.\]
We know from Theorem 4.11 of \cite{sober2019manifold} that $H(p)$ depend smoothly on $p$ we get that both $\varphi_{p}(x)$ and $\varphi_{p}^h(x)$ vary smoothly with $p$.
Combining this with the fact that $\MM$ is compact we can give a global constant bound for both $\norm{\varphi_{p}}_{\C^k(B_{sh}(0))}$ and $\norm{\varphi_{p}^h}_{\C^k(B_{sh}(0))}$.
Thus we obtain
\begin{align*}
\norm{\partial_{v}s_{\varphi, X}(x) - \partial_{v}\varphi_{p}(x)}_\infty \leq c_1 h^{k-1} \\
\norm{\partial_{v}s_{\varphi, X}(x) - \partial_{v}\varphi_{p}^h(x)}_\infty \leq c_2  h^{k-1}
\end{align*}
Then, by the triangle inequality we achieve
\begin{align*}
\norm{\partial_{v}\varphi_{p}^h(x) -  \partial_{v}\varphi_{p}^h(x)}_\infty 
&\leq 
\norm{\partial_{v}\varphi_{p}^h(x) -\partial_{v}s_{\varphi, X}(x) } +
\norm{\partial_{v}s_{\varphi, X}(x) - \partial_{v}\varphi_{p}(x)} \\
&\leq c_1 h^{k-1} + c_2 h^{k-1} = C_1 h^{k-1}
.\end{align*}

Finally, we note that the polynomial $\pi^*(x ~|~ p)$ which minimizes \eqref{eq:argmin_step2} coincides with the polynomial $\pi^*(x ~|~ 0)$ minimizing \eqref{eq:MLS_basic_Minimization} with respect to the domain $H(p)$.
As described in the alternative minimization problem of \eqref{eq:MLS_Basic_Quadratic_Form}, it reproduces polynomials in $\Pi_{k-1}$, and more specifically, the Taylor polynomial of degree $k-1$.
Therefore, we get for $ x\in B_{sh}(0) $
\begin{align*}
\norm{\partial_{v}\pi^*(x ~|~ p) - \partial_{v}\varphi_{p}(x)}_\infty \leq C_2 h^{k-1}\\
\norm{\partial_{v}\pi^*(x ~|~ p) - \partial_{v}\varphi_{p}^h(x)}_\infty \leq C_3 h^{k-1}
\end{align*}
as required.
\end{proof}

An immediate consequence of Lemma \ref{lem:MMLS_Deriv_Apprxoximation} is that the differential of $\varphi_{p}^h$ approximates the differential of $\varphi_{p}$
\begin{lemma}\label{lem:phi_diff_approx}
    Let the conditions of Lemma \ref{lem:MMLS_Deriv_Apprxoximation} hold, and let $\DD_0\varphi_{p}, \DD_0\varphi^h_{p}$ denote the differentials of $\varphi_{p}, \varphi^h_{p}$ at $0$ respectively.
    Then,
    \[
    \DD_0\varphi_{p} = \DD_0 \varphi_{p}^h + X
    ,\]
    where $X$ is a linear operator with operator norm
    \[
    \norm{X}_{op}  \leq C h^{k-1}
    ,\]
    for some constant $C$.
\end{lemma}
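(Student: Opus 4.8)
The plan is to identify the operator $X \defeq \DD_0\varphi_{p} - \DD_0\varphi_{p}^h$ and to bound its action on unit vectors directly using Lemma \ref{lem:MMLS_Deriv_Apprxoximation}, which has already discharged the analytic work. Since $\varphi_{p}, \varphi_{p}^h : H(p)\simeq\RR^d \to \RR^D$, their differentials at $0$ are linear maps $\RR^d\to\RR^D$, and for any $v\in\RR^d$ the evaluation $\DD_0\varphi_{p}(v)$ is exactly the directional derivative $\partial_{v}\varphi_{p}(0)$ (and similarly for $\varphi_{p}^h$). Hence for every $v$ we have
\[
Xv = \partial_{v}\varphi_{p}(0) - \partial_{v}\varphi_{p}^h(0)
,\]
so bounding the operator $X$ is the same as bounding this difference of directional derivatives.

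First I would invoke the first estimate in Lemma \ref{lem:MMLS_Deriv_Apprxoximation} at the point $x=0\in B_{sh}(0)$, which gives, for every direction $v$,
\[
\norm{Xv}_\infty = \norm{\partial_{v}\varphi_{p}(0) - \partial_{v}\varphi_{p}^h(0)}_\infty \leq C_1 h^{k-1}
,\]
with $C_1$ independent of $v$ and $p$. Next I would pass from this sup-norm bound to the Euclidean operator norm: using $\norm{w}_2\leq\sqrt{D}\norm{w}_\infty$ for $w\in\RR^D$, we get $\norm{Xv}_2\leq\sqrt{D}\,C_1 h^{k-1}$ for every unit vector $v$, and taking the supremum over all unit $v$ yields
\[
\norm{X}_{op}\leq\sqrt{D}\,C_1 h^{k-1}
,\]
so the claim holds with $C\defeq\sqrt{D}\,C_1$.

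I do not expect any genuine obstacle in this argument, as the lemma is essentially a reformulation of Lemma \ref{lem:MMLS_Deriv_Apprxoximation} in the language of operator norms. The only point that requires attention --- and the reason the statement holds at all --- is the \emph{uniformity} of the constant $C_1$ across all unit directions $v$; it is precisely this uniformity, already guaranteed by Lemma \ref{lem:MMLS_Deriv_Apprxoximation}, that licenses passing to the supremum over $v$ in the definition of the operator norm. Without it, a merely directionwise estimate would not immediately control $\norm{X}_{op}$.
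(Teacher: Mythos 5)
Your proof is correct and takes essentially the same route as the paper: define $X$ as the difference of the two differentials, identify $X v$ with the difference of directional derivatives $\partial_v\varphi_{p}(0) - \partial_v\varphi^h_{p}(0)$, apply Lemma \ref{lem:MMLS_Deriv_Apprxoximation}, and pass to the operator norm using the uniformity of the constant over unit directions $v$. If anything, your norm comparison is more careful than the paper's own: the paper asserts $\norm{x}\leq\norm{x}_\infty$ (which is backwards for the Euclidean norm) and falls back on equivalence of norms, whereas your explicit factor $\sqrt{D}$ in $\norm{w}_2\leq\sqrt{D}\norm{w}_\infty$ is the correct quantitative statement.
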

\begin{proof}
    Let us define 
    \[
    X = \DD_0\varphi_{p} - \DD_0 \varphi_{p}^h
    ,\]
    which is linear as a subtraction of two linear operators.
    Recall that vectors $v\in T_0H(p) = H(p)$ naturally correspond to directional derivatives; explicitly,
    \begin{align*}
        \DD_0\varphi_{p}\cdot v &= \partial_v \varphi_{p}(0) \\ 
        \DD_0\varphi^h_{p}\cdot v &= \partial_v \varphi^h_{p}(0)
    .\end{align*}
    From Lemma \ref{lem:MMLS_Deriv_Apprxoximation} we know that for any such $v$ we get
    \[
    \norm{X\cdot v} = \norm{\DD_0\varphi_{p}\cdot v  - \DD_0\varphi^h_{p}\cdot v} = \norm{\partial_v \varphi_{p}(0) - \partial_v \varphi^h_{p}(0)} \leq C_1 h^{k-1}
    ,\]
    since the Euclidean norm respects the inequality $\norm{x} \leq \norm{x}_\infty$ (essentially, this claim will be true in any given norm due to the equivalence of norms in finite dimensions).
    Thus,
    \[
    \norm{X}_{op} = \max_{\norm{v}=1}\norm{X \cdot v} \leq C h^{k-1}
    \]
\end{proof}

\begin{corollary}\label{cor:LocalIsometry}
	Define the local map $ \mu_p:W_p\to W^h $ by $ \varphi_{p}^h \circ \varphi_{p}^{-1} $.
	Then, its differential 
	\[ 
	\DD_p \mu_p = Id + Y 
	,\]
	where 
	\[
	\norm{Y}_{op} \leq C h^{k-1}
	,\]
	for some constant $C$.
\end{corollary}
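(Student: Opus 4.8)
The plan is to obtain the corollary from Lemma \ref{lem:phi_diff_approx} by a short chain-rule computation. Since $\mu_p = \varphi_{p}^h \circ \varphi_{p}^{-1}$ and $\varphi_{p}(0) = p$ (so that $\varphi_{p}^{-1}(p) = 0$), the chain rule gives
\[
\DD_p \mu_p = \left(\DD_0 \varphi_{p}^h\right)\circ\left(\DD_0 \varphi_{p}\right)^{-1}.
\]
For this to make sense I first need $\DD_0\varphi_{p}$ to be invertible; this follows because $\varphi_{p}$ is a local parametrization of $\MM$ (Lemma \ref{lem:1to1}), hence a local diffeomorphism onto $W_p$, so $\DD_0\varphi_{p}:H(p)\to T_p\MM$ is a linear isomorphism.

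Next I would substitute the conclusion of Lemma \ref{lem:phi_diff_approx}, namely $\DD_0\varphi_{p} = \DD_0\varphi_{p}^h + X$ with $\norm{X}_{op}\leq Ch^{k-1}$, in the equivalent form $\DD_0\varphi_{p}^h = \DD_0\varphi_{p} - X$. Feeding this into the chain-rule expression yields
\[
\DD_p \mu_p = \left(\DD_0\varphi_{p} - X\right)\circ\left(\DD_0\varphi_{p}\right)^{-1} = Id - X\circ\left(\DD_0\varphi_{p}\right)^{-1},
\]
where $Id$ is understood as $Id_{T_p\MM}$ viewed inside the ambient $\RR^D$ (all three operators here are maps from $T_p\MM$ into $\RR^D$, so the identity is ambient-valued). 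Setting $Y := -\,X\circ\left(\DD_0\varphi_{p}\right)^{-1}$ gives the desired form $\DD_p\mu_p = Id + Y$, and by submultiplicativity of the operator norm $\norm{Y}_{op}\leq \norm{X}_{op}\,\norm{(\DD_0\varphi_{p})^{-1}}_{op}$.

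The main obstacle is controlling $\norm{(\DD_0\varphi_{p})^{-1}}_{op}$ uniformly in $p\in\MM$, since otherwise the prefactor could absorb the $h^{k-1}$ gain. I expect to handle this cleanly using the structural form of $\varphi_{p}$ from \eqref{eq:phi_p}, namely $\varphi_{p}(x) = q(p) + x + \phi_{p}(x)$ with $\phi_{p}(x)\in H(p)^\perp$. Differentiating at $0$ gives $\DD_0\varphi_{p}\cdot v = v + \DD_0\phi_{p}\cdot v$, an orthogonal decomposition into the $H(p)$ part $v$ and the $H(p)^\perp$ part $\DD_0\phi_{p}\cdot v$. Hence $\norm{\DD_0\varphi_{p}\cdot v}^2 = \norm{v}^2 + \norm{\DD_0\phi_{p}\cdot v}^2 \geq \norm{v}^2$, so the smallest singular value of $\DD_0\varphi_{p}$ is at least $1$ and therefore $\norm{(\DD_0\varphi_{p})^{-1}}_{op}\leq 1$, uniformly over $p$. (Alternatively, the uniform bound could be argued from compactness of $\MM$ together with the smooth dependence of $H(p)$ on $p$ from Theorem 4.11 of \cite{sober2019manifold}, but the structural argument is sharper and avoids invoking compactness again.) Combining this with $\norm{X}_{op}\leq Ch^{k-1}$ yields $\norm{Y}_{op}\leq Ch^{k-1}$, completing the proof.
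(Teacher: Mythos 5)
Your proposal is correct, and its skeleton matches the paper's proof: the chain rule gives $\DD_p\mu_p = \DD_0\varphi_{p}^h \circ (\DD_0\varphi_{p})^{-1}$, Lemma \ref{lem:phi_diff_approx} is substituted in, and submultiplicativity of the operator norm reduces everything to controlling $\norm{(\DD_0\varphi_{p})^{-1}}_{op}$ uniformly in $p$ (you also get the sign right, $Y = -X\circ(\DD_0\varphi_{p})^{-1}$, where the paper writes $Y = X\circ(\DD_0\varphi_{p})^{-1}$; this is immaterial since only $\norm{Y}_{op}$ enters). The genuine difference is in that last controlling step. The paper invokes compactness of $\MM$ and the smooth dependence of $H(p)$ on $p$ to bound $\norm{\DD_0\varphi_{p}}_{op}$ from below by a constant, and along the way asserts $\norm{(\DD_0\varphi_{p})^{-1}}_{op} = 1/\norm{\DD_0\varphi_{p}}_{op}$ --- an identity that is false for general linear maps, since the norm of the inverse is the reciprocal of the \emph{smallest} singular value, not the largest. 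Your structural argument replaces (and in effect repairs) this: from \eqref{eq:phi_p}, $\DD_0\varphi_{p}\cdot v = v + \DD_0\phi_{p}\cdot v$ with $v\in H(p)$ and $\DD_0\phi_{p}\cdot v\in H(p)^\perp$, so by orthogonality $\norm{\DD_0\varphi_{p}\cdot v}\geq\norm{v}$, every singular value is at least $1$, and hence $\norm{(\DD_0\varphi_{p})^{-1}}_{op}\leq 1$ uniformly in $p$ and $h$. This is sharper (an explicit constant $1$), avoids a second appeal to compactness, and is exactly the quantity the argument needs; the paper's compactness route has the virtue of not depending on the tangential/normal structure of $\varphi_{p}$, but in this setting your version is the cleaner and more rigorous one. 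Your explicit remarks that $\DD_0\varphi_{p}$ is invertible (as the differential of a local parametrization) and that $Id$ must be read as the inclusion of $T_p\MM$ into $\RR^D$ are also points the paper glosses over.
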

\begin{proof}
	Let us write $\mu_p$ as the following composition
	\[
	\mu_p = \varphi_p^h \circ \varphi_p^{-1}:W_p \to W^h
	,\]
	where $\varphi_p:U\to W_p\subset \MM$ and $\varphi_p^h:U\to W^h\subset\Mh$ as defined in Lemma \ref{lem:1to1}.
	Then, by the chain rule we get
	\[
	\DD_p\mu_p = \DD_0 \varphi_p^h \circ \DD_p\varphi_p^{-1}
	.\]
	Furthermore, from the fact that $Id =\DD_p(Id)$ we know
	\begin{align*}
	Id = \DD_p(\varphi_p\circ\varphi_p^{-1}) = \DD_0\varphi_p\circ\DD_p\varphi_p^{-1}
	,\end{align*}
	and so we get
	\[
	\DD_p\varphi_p^{-1} = (\DD_0 \varphi_p)^{-1}
	.\]
	In addition, we know from Lemma \ref{lem:phi_diff_approx} that
	\[
	\DD_0 \varphi_p = \DD_0 \varphi_p^h + X
	,\]
	where $\norm{X}_{op} \leq c_1 h^{k-1}$.
	Applying $(\DD_0\varphi_p)^{-1}$ to both sides of the equation results with 
	\begin{align*}
	Id = \DD_0\varphi_p^h \circ (\DD_0\varphi_p)^{-1} &+ X \circ (\DD_0\varphi_p)^{-1} \\
	Id -  \DD_p\PP_k^h &= X \circ (\DD_0\varphi_p)^{-1}
	.\end{align*}
	Denoting $X \circ (\DD_0\varphi_p)^{-1}$ by $Y$ we get the desired expression, and
	\[
	\norm{Y}_{op} =  \norm{X \circ (\DD_0\varphi_p)^{-1}}_{op} \leq 
	\norm{X}_{op} \norm{ (\DD_0\varphi_p)^{-1}}_{op} = \norm{X}_{op} \frac{1}{\norm{ \DD_0\varphi_p}_{op}}
	.\]
	Since the manifold is differentiable and compact, and $H$ approximates the tangent, we can bound $\norm{ \DD_0\varphi_p}_{op}$ from below by $0<1/c_2$ a constant independent of $p$.
	Therefore,
	\[
	\norm{Y}_{op} \leq c_1 c_2 h^{k-1} = C h^{k-1}
	,\]
	as requested.
\end{proof}

\begin{lemma}\label{lem:Projected_h}
Let the Sampling Assumptions of Section \ref{sec:CleanSampling} and the Injectivity Conditions of Section \ref{sec:Injectivity} hold.
Let $\theta_1, \theta_2$ of \eqref{eq:J1def} and \eqref{eq:J2def} be with supports of size $c_1 h$ and $s h$ respectively and let $s \leq c_1$. 
Fix an arbitrary point $p\in \MM$ and denote the corresponding coordinate system resulting from \eqref{eq:Step1Minimization} by $(q, H) = (q(p), H(p))$, and let 
$$X = \{x_i = P_H(r_i - q) ~|~ \norm{r_i - p} \leq \mu\}_{i=1}^n\cap B_0(sh),$$
where $s$ is some constant and $\mu$ is smaller than the injectivity radius (i.e., the manifold's reach).
Then, for $h_{R, \MM}$ sufficiently small we get
\[
h_{X, B_0(sh)} \leq c \cdot h_{R, \MM}
,\]
where $c$ is some constant independent of $p$.
\end{lemma}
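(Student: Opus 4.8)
The plan is to establish the fill-distance bound pointwise: I would show that every target $y\in B_0(sh)\subset H$ has a projected sample $x_j\in X$ within distance $O(h)$, and then take the supremum over $y$. The geometric fact that makes this work is supplied by Lemma \ref{lem:1to1}: near $p$ the manifold is a graph over $H$ via $\varphi_p(x) = q + x + \phi_p(x)$ with $\phi_p(x)\in H^\perp$, so orthogonal projection onto $H$ inverts the tangential part, $P_H(\varphi_p(x) - q) = x$. Since $P_H$ is an orthogonal projection onto a linear subspace, it is non-expansive, and this non-expansiveness is what will transfer the fill distance on $\MM$ down to the fill distance of the projected sites.

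Concretely, I would fix $y\in B_0(sh)$ and, for $h$ small enough that $B_0(sh)$ lies inside the domain of $\varphi_p$, set $m = \varphi_p(y)\in\MM$, so that $P_H(m - q) = y$. By the quasi-uniformity of $R$ (Definition \ref{def:quasi-uniform} and the Sampling Assumptions) there is a sample $r_j$ with $\norm{m - r_j}\leq h$, and then
\[
\norm{x_j - y} = \norm{P_H(r_j - q) - P_H(m - q)} = \norm{P_H(r_j - m)} \leq \norm{r_j - m} \leq h.
\]
I would also verify that $r_j$ is one of the samples actually retained, i.e.\ that $\norm{r_j - p}\leq\mu$: bounding $\norm{r_j - p}\leq\norm{r_j - m} + \norm{\varphi_p(y) - \varphi_p(0)}\leq h + Lsh$ with a Lipschitz constant $L$ for $\varphi_p$ gives $\norm{r_j - p}\leq (1+Ls)h < \mu$ once $h$ is small. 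Because $\MM$ is compact and $H(p)$, hence $\varphi_p$ and its derivatives, depend smoothly on $p$ (Theorem 4.11 of \cite{sober2019manifold}), the constant $L$, and therefore all the resulting constants, can be chosen uniformly in $p$.

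The one point that needs care — and the main obstacle — is that $X$ keeps only those projected samples lying inside $B_0(sh)$, whereas the $x_j$ produced above can fall just outside the disk when $y$ is near its boundary (one only controls $\norm{x_j}\leq\norm{y} + h$, which may exceed $sh$). To circumvent this I would apply the construction not to $y$ itself but to a point $\tilde y$ pulled slightly toward the origin — replacing $y$ by $\tilde y = \frac{(s-1)h}{\norm{y}}\,y$ whenever $\norm{y} > (s-1)h$, and keeping $\tilde y = y$ otherwise. Then $\norm{\tilde y}\leq (s-1)h$ forces the matched sample to satisfy $\norm{x_j}\leq\norm{\tilde y} + h \leq sh$, so $x_j\in B_0(sh)$, while $\norm{y - \tilde y}\leq h$; the triangle inequality yields $x_j\in X$ with $\norm{x_j - y}\leq 2h$. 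Taking the supremum over $y\in B_0(sh)$ then gives $h_{X, B_0(sh)}\leq 2h = c\,h_{R,\MM}$ with $c$ independent of $p$, as claimed.
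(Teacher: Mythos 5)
Your proof is correct, but it takes a genuinely leaner route than the paper's. The paper spends most of its effort on an energy-comparison argument: since $(q(p),H(p))$ minimizes \eqref{eq:Step1Minimization} and $(p,T_{p}\MM)$ is admissible, $J_1(q(p),H(p) ~|~ p)\leq J_1(p,T_{p}\MM ~|~ p)=O(h^4)$, whence the retained samples satisfy $d(r_i-q,H)=O(h^2)$; combining this with Lemma 4.4 of \cite{sober2019manifold} (closeness of $P_{H(p)}$ to $P_{T_{p}\MM}$), the paper concludes that projection onto $H$ distorts the relevant distances by only $O(h^2)$ and then transfers the fill distance of $R$ to $X$ through the same non-expansiveness step you use. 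You bypass the entire energy argument by importing the graph structure $\varphi_p(x)=q+x+\phi_p(x)$ from Lemma \ref{lem:1to1} (legitimate, with no circularity, since that lemma does not depend on the present one), so that every target $y\in B_0(sh)$ has an exact preimage $m=\varphi_p(y)$ with $P_H(m-q)=y$, and the bound follows from the fill distance of $R$ plus non-expansiveness of $P_H$. What your route buys: it is shorter, produces an explicit constant ($c=2$), and it treats explicitly the boundary/membership issue --- that the sample matched to $\varphi_p(y)$ may project just outside $B_0(sh)$ --- via the radial pull-back of $y$; the paper's write-up glosses over exactly this point with its ``$+\,O(h^2)$'' and ``for sufficiently small $h$.'' What the paper's route buys: it re-derives the needed geometry (near-tangency of $H(p)$, $O(h^2)$-flatness of the local sample set over $H$, and the cardinality bound $\#X\leq(s+\tilde\varepsilon)\rho^d$) directly from the minimality of \eqref{eq:Step1Minimization}, byproducts that are informative beyond the fill-distance conclusion. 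Two small caveats you should make explicit: the pull-back construction requires $s\geq 1$ (harmless, since for $s<1$ the set $X$ can be empty and the statement degenerate), and the claim that $B_0(sh)$ lies in the domain $U$ of $\varphi_p$ for all small $h$, \emph{uniformly in $p$}, deserves a sentence --- it follows from compactness of $\MM$ and the reach bound, by the same smooth-dependence argument you invoke for the uniform Lipschitz constant $L$.
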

\begin{proof}
The proof outline is as follows
\begin{enumerate}
    \item We first look at the sample set projected onto the tangent $T_{p}\MM$,
    \[
    \tilde X = \{\tilde x_i = P_{T_{p}\MM}(r_i - p) ~|~ \norm{r_i - p}\leq \mu\}_{i=1}^n\cap B_0(c_1 h)
    \]
    and its fill-distance 
    \[
    h_{\tilde X, B_0(c_1 h)} \leq h_{R, \MM}
    .\]
    \item Then by Lemma \ref{lem:h-delta-is-rho} from Appendix A we achieve that there exists a constant $\rho$ such that the number of samples in $\tilde X$ is bounded by $c_1 \cdot \rho^d$.
    \item As a result, we achieve that $J_1(p, T_{p}\MM) = O(h^4)$.
    \item Since $ (q(p), H(p))$ are the minimizers of \eqref{eq:Step1Minimization} and since $(p, T_{p}\MM)$ are in the search space of the minimization problem of \eqref{eq:Step1Minimization}, we get that 
    \[
    J_1(q(p), H(p)) \leq J_1(p, T_{p}\MM) = O(h^4)
    \]
    \item Since the number of samples in $X$ is bounded by $c_1\cdot \rho^d$ as well we achieve that
    \[
    d(r_i - q, H) = O(h^2)
    \]
    \item Thus, for small enough $h_{R, \MM}$ we get that 
    \[
    h_{X, B_0(c_1 h)} \leq c\cdot h_{R, \MM}
    .\]
\end{enumerate}

Looking at the projected sample
    \[
    \tilde X = \{\tilde x_i = P_{T_{p}\MM}(r_i - p) ~|~ \norm{r_i - p}\leq \mu\}_{i=1}^n\cap B_0(c_1 h)
    ,\]
and measuring the fill-distance of that set we get
\begin{align*}
h_{\tilde X, B_0(c_1 h)} &= \sup_{\tilde x\in B_0(c_1 h)} \min_{\tilde x_i\in \tilde X}\norm{\tilde x - \tilde x_i} \\
&= \sup_{ r\in \phi[B_0(c_1 h)]} \min_{r_i\in R\cap \phi[B_0(c_1 h)]}\norm{P_{T_{p}\MM}(r - p) - P_{T_{p}\MM}(r_i - p)} 
,\end{align*}
where $r = \phi(\tilde x)$ is the unique point which satisfies $P_{T_{p}\MM}(r - p) = \tilde x$.
Thus, 
\begin{align*}
h_{\tilde X, B_0(c_1 h)} 
&\leq \sup_{ r\in \phi[B_0(c_1 h)]} \min_{r_i\in R\cap \phi[B_0(c_1 h)]}\norm{r - r_i}\\
&\leq h_{R, \MM}
.\end{align*}
Furthermore, since the tangent is a linear approximation at $p$ we know that for $r_i\in \phi[B_0(c_1 h)]$
\[
d(r_i - p, T_{p}\MM) = O(h^2)
.\]
Thus, for any $\varepsilon$ there is small enough $h$ such that 
\[
\phi[B_0(c_1 h)]\subset B_{p}((c_1 + \varepsilon)h))
.\]
From the fact that $R$ is a quasi-uniform set and Lemma \ref{lem:h-delta-is-rho} we know that
\[
\# (R\cap B_{p}((c_1 + \varepsilon)h)) \leq (c_1 + \varepsilon) \cdot \rho^d
\]
for some constant $\rho$.
Thus, we achieve that 
\[
\#\tilde X\leq (c_1 + \varepsilon) \cdot \rho^d
\]
as well.

Now we show that $J_1(q(p), H(p) ~|~ p) = O(h^4)$ by showing that 
\[
J_1(q(p), H(p) ~|~ p) \leq J_1(p, T_{p}\MM ~|~ p) = O(h^4)
.\] 
The coordinate system $(q(p), H(p))$ is the one minimizing the following energy
\[
J_1(q, H ~|~ r) = \sum_{i=1}^{n} d(r_i-q , H)^2 \theta_1(\| r_i - q\|) 
\]
under the constraints of \eqref{eq:Step1Minimization}:
\begin{enumerate}
\item $r-q \perp H$  
\item $q\in B_{\mu}(r)$ 
\item $\#\left(R\cap B_{ h}(q)\right) \neq 0$
\end{enumerate}
We note that a possible choice of coordinates that respect these three constraints is given by taking $q = p$ and $H = T_{p}\MM$. 
Thus, we achieve
\[
J_1(q(p), H(p) ~|~ p)  \leq J_1(p, T_{p}\MM ~|~ p) 
\]
, and
\[
J_1(p, T_{p}\MM ~|~ p) = \sum_{i=1}^{n} d(r_i-p , T_{p}\MM)^2 \theta_1(\| r_i - p\|) 
.\]
Since $T_{p}\MM$ is the linear approximation to $\MM$ at $p$, we know that for $r_i$ in the support of $\theta_1$ (i.e., $\norm{r_i - p} \leq c_1 h$)
\[
\norm{P_{T_{p}\MM}(r_i - p) } \leq c_1 h_{R, \MM}
,\]
and thus,
$$d(r_i - p,T_{p}\MM) \leq c_2 h_{R, \MM}^2.$$
Furthermore, since $\#\tilde X \leq (c_1 + \varepsilon)\rho^d$ we achieve that
\[
J_1(p, T_{p}\MM ~|~ p) = O(h^4)
\]
and, as a result
\[
J_1(q(p), H(p) ~|~ p) = O(h^4)
\]
as well.
From Lemma 4.4 of \cite{sober2019manifold} we know that under the Injectivity Conditions as $h\to 0$ we achieve that $q(p)$ approaches $p$, and if we denote $\epsilon = \norm{q(p) - p}$, then
\[
\norm{P_{H(p)} - P_{T_{p}\MM}}_{op} \leq O(h + \epsilon^2)
.\]
Thus, for sufficiently small $h$ we get that
\[
\#X \leq (s + \tilde \varepsilon)\cdot \rho^d
\]
and we get that 
\[
d(r_i - q(p), H(p)) = O(h^2)
.\]
Finally,
\begin{align*}
    h_{X, B_0(s h)} &= \sup_{x\in B_0(s h)}\min_{x_i\in X} \norm{x - x_i}\\
    &=\sup_{r\in \varphi_{p}[B_0(s h)]}\min_{r_i\in R\cap \varphi[B_0(s h)]} \norm{P_H(r - q) - P_H(r_i - q)}\\
    &\leq
    \sup_{r\in \varphi_{p}[B_0(s h)]}\min_{r_i\in R\cap \varphi[B_0(s h)]} \norm{r - r_i} + O(h^2)
.\end{align*}
Thus, for sufficiently small $h$ we obtain
\[
h_{X, B_0(s h)} \leq c \cdot h_{R, \MM}
.\]
\end{proof}

\subsection{Geodesic Paths on $ \Mh $ Converge to Geodesic Paths in $ \MM $}
Let us fix points $ p_1, p_2\in \MM $ and denote a path between them by $ \gamma(t):[a,b]\to\MM $explicitly, $ \gamma(a) = p_1, \gamma(b) = p_2 $.
Then, the length of the curve is defined as
\[
L_g(\gamma(t)) = \int_a^b\norm{\dot \gamma(t)}_g dt
,\]
and the geodesic distance can be defined by the \emph{piece-wise smooth} curve $\gamma(t):[a,b]\to\MM$ of minimial length such that $\gamma(a) = p_1, \gamma(b) = p_2$.
Namely,
\begin{equation}\label{eq:geodesic}
\rho_\MM(p_1, p_2) = \min_{\gamma(t)}L_g(\gamma(t)) = 
\min_{\gamma(t)}\int_a^b\norm{\dot \gamma(t)}_g dt
.\end{equation}
Notice, that one can always choose the arc-length parameter defined by 
\[
s(t) = \int_a^{t}\norm{\dot \gamma(t)}_g dt
\]
and then get
\[
L_g(\gamma(t)) = L_g(\gamma(s(t))) = \int_a^b ds
.\]
Thus, when looking for a geodesic curve (i.e., one that minimizes the length) we can limit the discussion to curves with arc-length parameter; that is $\norm{\dot \gamma(t)}_g = 1$, up to the zero measurable set where $ \gamma $ is not smooth.

\paragraph{A (Simplified) Local Case:}
Let $\gamma^*(t):[a, b]\to\MM$ denote a geodesic curve such that $\gamma^*(a) = p_1, \gamma^*(b) = p_2$ and $\norm{\dot\gamma^*(t)}_g = 1$.
That is,
\[
\rho_\MM(p_1, p_2) = L_g(\gamma^*(t))
.\]
If $ p_1, p_2$ belong to an open neighborhood $W_{p_1}\subset \MM $ such that 
\[
\varphi_{p_1}:U \to W_{p_1}
\]
and $ U\subset B_{sh}(0)\subset H(p_1) $, then we can look at the map $ \mu_{p_1}:W_{p_1}\to W_{p_1}^h $ defined by
\[
\mu = \varphi_{p_1}^h \circ \varphi_{p_1}^{-1}
\]
and from Corollary \ref{cor:LocalIsometry} we know that its differential
\[
\DD_{p_1}\mu_{p_1} = Id + Y 
,\]
where $ \norm{Y}_{op} \leq c h^{k-1} $.

In this case, we could look at the pushed forward curve
\[\gamma^h(t)) = \mu_{p_1}(\gamma^*(t)).\]
By the chain rule, we get
\[
\dot \gamma^h(t) = \at{\partial_t}{t =  t} (\mu_{p_1} \circ \gamma^*(t)) = 
\DD_{p_1}\mu_{p_1} \cdot \partial_t\gamma^*(t) = 
\DD_{p_1}\mu_{p_1} \cdot \dot\gamma^*( t)
.\]
Using Corollary \ref{cor:LocalIsometry} we achieve that
\[
\dot \gamma^h( t) = \dot \gamma^*( t) + Y \cdot \dot \gamma^*( t)
,\]
and thus,
\begin{align}\label{eq:BoundGammaDot}
\begin{split}
\norm{\dot \gamma^h(t)}_{g^h} &= 
\norm{\dot \gamma^*(t) + Y \cdot \dot \gamma^*(t)}_{g}\\ &\leq 
\norm{\dot \gamma^*(t)}_{g} + \norm{Y \cdot \dot \gamma^*(t)}_{g} \\
&\leq 
\norm{\dot \gamma^*(t)}_{g} + \norm{Y}_{op}\norm{ \dot \gamma^*(t)}_{g} \\
&= 1 + \norm{Y}_{op}\\
&\leq 1 + c h^{k-1}
\end{split}
.\end{align}
Since the manifold $\MM$ is closed (i.e., $ \rho_\MM(p_1, p_2)$ can be bounded globally), the length of $\gamma^h(t)$ can now be bounded by
\begin{equation}\label{eq:local_geodesic_bound}
L_{g^h}(\gamma^h(t)) \leq \int_a^b 1 + c h^{k-1} dt = L_g(\gamma^*) + c h^{k-1} L_g(\gamma^*)
.\end{equation}
Assuming the lengths of curves are bounded we get
\[
L_{g^h}(\gamma^h(t)) \leq = \rho_\MM(p_1, p_2)  + c_1 h^{k-1}  
.\]
From the fact that the geodesic curve has minimal length we get
\[
\rho_{\Mh}(p_1^h, p_2^h) \leq \rho_\MM(p_1, p_2) + c_1 h^{k-1}
,\]
where $ \tilde p_1^h = \mu_{p_1}(p_1) $ and $ \tilde p_2^h = \mu_{p_1}(p_2) $.
Unfortunately, this analysis does not solve the issue.
Explicitly, even though
\[
\mu_{p_1}(p_1) = \tilde p_1^h = p_1^h = \PP_k^h(p_1)
,\]
it is apparent that
\[
\mu_{p_1}(p_2) = \tilde p_2^h \neq p_2^h = \PP_k^h(p_2)
\]
(see Figure \ref{fig:zj}).
Therefore, we need to bridge this gap between $ p_2^h $ and $ \tilde p_2^h $ and make sure that it does not hamper the $ O(h^{k-1}) $ convergence rates.
By doing this, we will achieve the desired bound from the triangle inequality.

\paragraph{The (Real) Global Case:} Let us now remove the assumption that $ p_1, p_2 $ belong to such a neighborhood $W$ that can be mapped by $ \varphi_{p_1} $.
In this case we sample the curve $ \gamma^*(t) $ at the points $ \{z_j = \gamma^*(t_j)\}_{j=1}^{J+1} $ densely enough such that each pair $ z_j, z_{j+1} $ belong to an open neighborhood $W_j\subset B_{sh}(z_j)$ that can be mapped by 
\[
\varphi_{ z_j} : U_j\to W_j 
.\]
Then, we look at the set of points 
\begin{align*}
	z^h_j = \PP_k^h(z_j)\in \Mh, &\quad (j = 1\ldots J+1)\\
	\tilde z^h_{j+1} = \mu_{z_j}(z_{j+1})\in \Mh, &\quad (j=1, \ldots, J)
.\end{align*}
Note, that by definition
\[
\mu_{z_j}(z_j) = z^h_j
.\]
Thus, essentially the set of points we are looking at are
\begin{align*}
z^h_j = \mu_{z_j}(z_j)\in \Mh, &\quad (j = 1\ldots J+1)\\
\tilde z^h_{j+1} = \mu_{z_j}(z_{j+1})\in \Mh, &\quad (j=1, \ldots, J)
.\end{align*}
\begin{figure}
	\centering
	\includegraphics[width=0.8\linewidth]{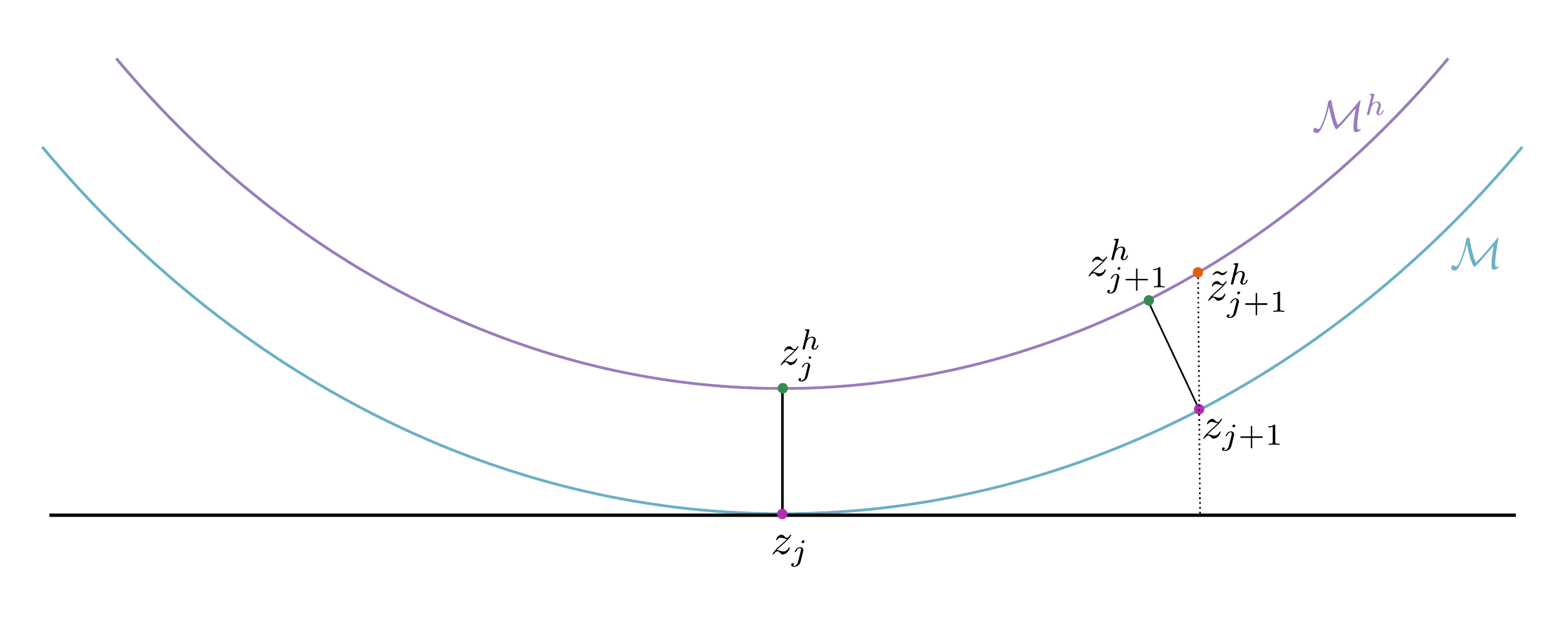}
	\caption{Illustration of the difference between $z_{j+1}^h $ and $ \tilde z_{j+1}^h $ with respect to the coordinate system $ H(z_j) $ (i.e., aligned in a way that $ P_{H(z_j)} (z_j) = P_{H(z_j)}(z^h_j) = 0$). }
	\label{fig:zj}
\end{figure}
Using these sets of points we construct a piece-wise smooth curve $ \gamma^{h,J}(t) $.
We begin by defining the first segment $ \gamma^h_1(t) $ connecting $ z_1^h $ with $ \tilde z^h_2 $ by pushing the curve $ \gamma^*_1 $ (the restriction of $ \gamma^*(t) $ between $z_1$ and $ z_2 $) with $ \mu_{z_1} $.
Explicitly,
\[
\gamma^h_1(t) = \mu_{z_1}\circ\gamma^*_1(t) = \mu_{z_1}\circ\at{\gamma^*(t)}{[z_1, z_2]_\MM}
.\]
Note, that $ \gamma^h_1(t) $ ends at $ \mu_{z_1}(z_2) = \tilde z^h_2 $.
Then, the following segment denoted by $ \tilde \gamma^h_1(t) $ is defined through the geodesic path connecting $ \tilde z^h_2 $ and $ z^h_2 = \PP_k^h(z_2) = \mu_{z_2}(z_2)$.
Subsequently, 
\[
\gamma^h_2(t) = \mu_{z_1}\circ\gamma^*_2(t) = \mu_{z_2}\circ\at{\gamma^*(t)}{[z_2, z_3]_\MM}
,\]
and $ \tilde\gamma^h_2(t) $ is defined through the geodesic path connecting $ \tilde z^h_3 $ and $ z^h_3 = \PP_k^h(z_3) = \mu_{z_3}(z_3)$.
We define the rest of the piece-wise smooth curve in the same manner (see Figure \ref{fig:PiecewiseSmoothCurve}).
Let us denote
\begin{align*}
\ell^h_j = L_{g^h}(\gamma^h_j(t)), &\quad (j=1, \ldots J) \\
\ell^*_j = L_{g}(\gamma^*_j(t)), &\quad (j=1, \ldots J) \\
.\end{align*}
Then, from the considerations portrayed in the Localized Case (i.e., \eqref{eq:BoundGammaDot}) we get that
\[
\ell^h_j \leq \ell^*_j 
.\]
Furthermore, from Lemma \ref{lem:eps_geodesic_bound} we achieve that
\[
L_{g^h}(\tilde\gamma^h_j(t)) \leq c_1 h^{k}
.\]
Therefore,  
\begin{align*}
L_{g^h}(\gamma^{h, J}(t)) 
&= \sum_{j=1}^{J} L_{g^h}(\gamma^h_j(t)) + L_{g^h}(\tilde \gamma^h_j(t)) \\ 
&\leq \sum_{j=1}^{J} \ell^h_j + c_1 h^{k}\\
&\leq \sum_{j=1}^{J} \ell^*_j + c_2 h^{k-1} \ell^*_j + c_1 h^{k} \\ 
&= L_{g}(\gamma^*(t)) +  L_{g}(\gamma^*(t)) \cdot c_2 h^{k-1} + J c_1 h^{k} 
,\end{align*}
where the second inequality comes from \eqref{eq:local_geodesic_bound}.
It is clear that for sufficiently small $h$ we have $ J=O(L_g(\gamma^*(t))/h) $, and so we obtain
\[
L_{g^h}(\gamma^{h, J}(t)) \leq \rho_\MM(z_1, z_{J+1}) +  \rho_\MM(z_1, z_{J+1}) \cdot c_1 h^{k-1} + c_3 h^{k-1}
.\]
Since $ \MM $ is closed (i.e., compact and bounded) and of bounded reach we know that the length of curves on $ \MM $ are bounded, and we obtain
\[
\rho_{\Mh}(p^h_1, p^h_2) \leq \rho_\MM(p_1, p_2)(1 + C_1 h^{k-1})
,\]
where $ p_1 = z_1, p_2 = z_{J+1} $ and $ p^h_1 = z^h_1, p^h_2 = z^h_{J+1} $.

By symmetry, we can show
\[
\rho_\MM(p_1, p_2) \leq \rho_{\Mh}(p_1^h, p_2^h)(1 + C_2 h^{k-1})
,\]
and the proof of Theorem \ref{thm:MainResult} is concluded.\qed
\begin{figure}
	\centering
	\includegraphics[width=0.4\linewidth]{./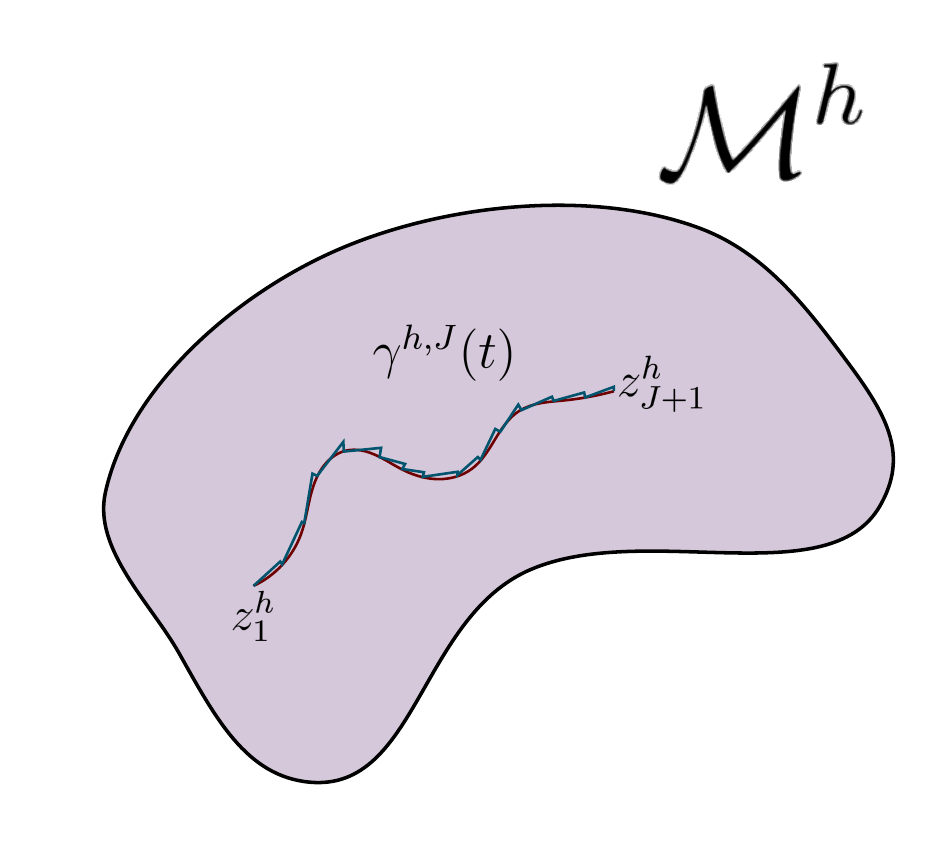}
	\caption{Illustration of $ \gamma^{h, J}(t) $ (the piece-wise smooth curve in blue) drawn near the limit curve $ \gamma^h(t) $ (in red) }
	\label{fig:PiecewiseSmoothCurve}
\end{figure}

\begin{lemma}\label{lem:eps_geodesic_bound}
	Let $\rho_\MM(z_j, z_{j+1}) \leq s h$ then for sufficiently small $ h $
	\[
	\rho_{\Mh}(z^h_{j+1}, \tilde z^h_{j+1}) \leq C h^{k}
	.\]
\end{lemma}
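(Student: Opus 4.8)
The plan is to bound the \emph{ambient} (Euclidean) distance between $z^h_{j+1}$ and $\tilde z^h_{j+1}$ by $O(h^k)$ and then convert this into a bound on the intrinsic distance $\rho_{\Mh}$, using the fact that $\Mh$ has uniformly controlled geometry for small $h$. First I would rewrite both points through the two local parametrizations supplied by Lemma \ref{lem:1to1}. On the one hand, $z^h_{j+1} = \PP^h_k(z_{j+1}) = \varphi^h_{z_{j+1}}(0)$ is the image of the origin under the chart centred at $z_{j+1}$ itself. On the other hand, writing $x^* = \varphi^{-1}_{z_j}(z_{j+1})\in H(z_j)$ for the coordinate of $z_{j+1}$ in the chart centred at $z_j$, we have $\tilde z^h_{j+1} = \mu_{z_j}(z_{j+1}) = \varphi^h_{z_j}(x^*)$.

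The key observation is that \emph{both} $z^h_{j+1}$ and $\tilde z^h_{j+1}$ lie within $O(h^k)$ of the single point $z_{j+1}\in\MM$. Indeed, by construction $\varphi_{z_{j+1}}(0) = z_{j+1}$ and $\varphi_{z_j}(x^*) = z_{j+1}$, so applying the zeroth-order Manifold-MLS estimate \eqref{eq:pi_est} in each chart gives
\[
\norm{z^h_{j+1} - z_{j+1}}_\infty = \norm{\varphi^h_{z_{j+1}}(0) - \varphi_{z_{j+1}}(0)}_\infty \leq C_0 h^k, \qquad \norm{\tilde z^h_{j+1} - z_{j+1}}_\infty = \norm{\varphi^h_{z_j}(x^*) - \varphi_{z_j}(x^*)}_\infty \leq C_0 h^k.
\]
For the second inequality I need $x^*$ to lie in the domain $B_{sh}(0)\subset H(z_j)$ on which \eqref{eq:pi_est} is valid; this is guaranteed by the construction preceding the lemma, where the breakpoints $z_j$ are chosen densely enough (using $\rho_\MM(z_j,z_{j+1})\leq sh$) that $z_{j+1}\in W_j = \varphi_{z_j}[U_j]$ with $U_j\subset B_{sh}(0)$, so that $x^* = \varphi^{-1}_{z_j}(z_{j+1})\in B_{sh}(0)$. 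The triangle inequality (and equivalence of the $\ell^\infty$ and Euclidean norms) then yields
\[
\norm{z^h_{j+1} - \tilde z^h_{j+1}} \leq 2 C_0 h^k.
\]

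It remains to pass from this ambient bound to the intrinsic distance on $\Mh$. Here I would invoke that, for $h\leq h_0$, $\Mh$ is a compact $C^k$ submanifold whose reach is bounded below uniformly in $h$ — a consequence of the Hausdorff convergence $\Mh\to\MM$ in \eqref{eq:ConvergenceRate} together with the first-order estimates of Lemma \ref{lem:MMLS_Deriv_Apprxoximation}, which keep the tangent spaces of $\Mh$ close to those of $\MM$. For any submanifold with reach bounded below by some $\tau_0>0$, two points at ambient distance $\varepsilon\leq\tau_0/2$ can be joined by a path of intrinsic length at most $c\,\varepsilon$ with $c$ depending only on $\tau_0$; applying this with $\varepsilon = 2C_0 h^k$ gives
\[
\rho_{\Mh}(z^h_{j+1}, \tilde z^h_{j+1}) \leq c\,\norm{z^h_{j+1} - \tilde z^h_{j+1}} \leq C h^k,
\]
as claimed. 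I expect the main obstacle to be precisely this last step: the ambient $O(h^k)$ bound is essentially immediate from the interpolation estimate \eqref{eq:pi_est}, so the analytic weight of the lemma sits in certifying a lower bound on the reach of $\Mh$ that does not degenerate as $h\to 0$, which is what makes the ambient-to-geodesic comparison uniform in $h$ and $p$.
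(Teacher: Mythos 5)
Your proof is correct, and it reaches the ambient $O(h^k)$ bound by a genuinely different route than the paper. The paper works entirely inside the single chart $(q(z_j), H(z_j))$: it observes that $\tilde z^h_{j+1}$ and $z_{j+1}$ have the \emph{same} projection onto $H(z_j)$ (both are graph points over the same coordinate), bounds the tangential discrepancy $\norm{P_{H(z_j)}(\tilde z^h_{j+1}) - P_{H(z_j)}(z^h_{j+1})} \leq c_2 h^k$ via \eqref{eq:pi_est} applied to $z_{j+1}$, and then transfers this to the normal components through a uniform Lipschitz bound on the graph function $\phi^h_{z_j}:H(z_j)\to H(z_j)^\perp$ of $\Mh$. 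You instead anchor both points at $z_{j+1}$ itself: since $z^h_{j+1} = \varphi^h_{z_{j+1}}(0)$, $\tilde z^h_{j+1} = \varphi^h_{z_j}(x^*)$, and $\varphi_{z_{j+1}}(0) = \varphi_{z_j}(x^*) = z_{j+1}$, two applications of \eqref{eq:pi_est} --- one in the chart at $z_{j+1}$, one in the chart at $z_j$ --- plus the triangle inequality give $\norm{z^h_{j+1} - \tilde z^h_{j+1}} \leq 2C_0 h^k$. Your argument is shorter and avoids both the projection identity and the Lipschitz estimate for $\phi^h_{z_j}$; its only extra requirement is that \eqref{eq:pi_est} hold uniformly over base points and over $x\in B_{sh}(0)$, which the paper already guarantees by compactness, and your check that $x^*\in B_{sh}(0)$ (from $\rho_\MM(z_j,z_{j+1})\leq sh$ and the choice of the $W_j$) is exactly what is needed.

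One caveat on the final, ambient-to-geodesic step, which is essentially the same in both proofs: you correctly identify a uniform-in-$h$ lower bound on $rch(\Mh)$ as the analytic weight of this step, but your proposed justification --- Hausdorff convergence \eqref{eq:ConvergenceRate} together with the first-order estimates of Lemma \ref{lem:MMLS_Deriv_Apprxoximation} --- is not by itself sufficient: $C^1$-closeness does not control curvature, and the reach can collapse under perturbations that are small in $C^1$ but large in $C^2$ (e.g., oscillations of amplitude $h^k$ and wavelength $h^{(k+1)/2}$). What actually delivers the uniform bound is the uniform bound on $\norm{\varphi^h_{p}}_{\C^k(B_{sh}(0))}$ obtained in the proof of Lemma \ref{lem:MMLS_Deriv_Apprxoximation} (which controls the second fundamental form of $\Mh$), combined with Hausdorff proximity to $\MM$ and injectivity of $\PP^h_k$ to rule out global self-approach. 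The paper's own proof carries the same implicit debt --- it simply invokes a constant $c_{\Mh}$ depending on $rch(\Mh)$ --- so this is a shared gap rather than a defect of your approach, but the justification you sketch should be replaced by the second-order one.
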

\begin{proof}
	For simplicity we assume that $ q(z_j) = 0 $.
	We first remind that
	\[
	z^h_{j+1} = \PP_k^h(z_{j+1}) 
	,\]
	and thus by \eqref{eq:pi_est}
	\[
	\norm{z^h_{j+1} - z_{j+1}}\leq c_1 h^{k}
	.\]
	Accordingly, we have
	\[
	\norm{P_{H(z_j)}(z_{j+1}) - P_{H(z_j)}(z^h_{j+1})} \leq c_2 h^{k}
	.\]
	On the other hand we have
	\[
	P_{H(z_j)}(z_{j+1}) = P_{H(z_j)}(\tilde z^h_{j+1})
	,\]
	and so
	\[
	\norm{P_{H(z_j)}(\tilde z^h_{j+1}) - P_{H(z_j)}(z^h_{j+1})} \leq c_2 h^{k}
	\]
	as well.
	Now, since $ \varphi^h_{z_j} $ so is $ \phi^h_{z_j}:H(z_j)\to H^\perp_{z_j} $ (as defined in \eqref{eq:phi_p}; see Figure \ref{fig:zj}) is a smooth function, and through taking the zero$ ^\textrm{th} $ order Taylor approximation around $ x_0 = P_{H(z_j)}(z_{j+1}) $ we know that
	\[
	\phi^h_{z_j}(x_0 + \varepsilon) = \phi^h_{z_j}(x_0) + c_3 \varepsilon 
	,\]
	where $ c_3 $ depends on the first derivatives of $ \varphi^h_{z_j} $.
	Since $ \varphi^h_{z_j} $ vary smoothly with the change in $ z_j $ and $ \MM $ is compact, we can bound $ c_3 $ by a uniform constant independent of $ z_j $.
	Therefore, by taking $ \varepsilon =  P_{H(z_j)}(\tilde z^h_{j+1}) - P_{H(z_j)}(z^h_{j+1}) $ we get
	\[
	\norm{\varepsilon} \leq c_2 h^k
	,\]
	and so,  since
	\[
	\norm{\tilde z^h_{j+1} - z^h_{j+1}} =
	\norm{\varphi^h_{z_j}(P_{H(z_j)}(\tilde z^h_{j+1})) - \varphi^h_{z_j}(P_{H(z_j)}(z^h_{j+1}))}
	,\]
	we achieve
	\[
	\norm{\tilde z^h_{j+1} - z^h_{j+1}} = 
	\sqrt{\varepsilon^2 + \left(\phi^h_{z_j}(P_{H(z_j)}(\tilde z^h_{j+1})) - \phi^h_{z_j}(P_{H(z_j)}(z^h_{j+1}))\right)^2} \leq
	c_2 h^k + c_3 h^k = c_4 h^k
	.\]
	Finally, as $ \MM\in \C^2 $ and using a Taylor approximation of $ \MM $ over the tangent at $ z_{j+1}^h $, for sufficiently small $ h $  we have
	\[
	\rho_{\Mh}(\tilde z^h_{j+1}, z^h_{j+1})\leq  \norm{\tilde z^h_{j+1}- z^h_{j+1}} + c_{\Mh}\cdot\norm{\tilde z^h_{j+1}- z^h_{j+1}}^2  
	,\]
	where $ c_{\Mh} $ is a constant depending on $ rch(\Mh) $, the manifold's reach.
	Explicitly, the reach bounds the maximal principal curvature of $ \Mh $, and as such it bounds the second order directional derivatives of the manifold as a local function over the tangent.  
	Therefore, we get
	\[
	\rho_{\Mh}(\tilde z^h_{j+1}, z^h_{j+1}) \leq C h^k
	,\]
	for some constant $ C $, as required.
\end{proof}

\section{Numerical Simulations}\label{sec:Numerical}

We now propose a simple algorithm to approximate geodesic distances based upon the aforementioned theoretical analysis.
The basic idea is to sample $ \Mh $ with many more samples than in the original point cloud to arrive at accurate approximations of geodesic distances on $ \Mh $.

\subsection{Naive Geodesic Approximation}\label{sec:Algorithms}
Let $ R = \{r_i\}_{i=1}^n $ be a sample-set of $\MM $ a smooth $ d $-dimensional submanifold of $ \RR^D $.
Then, we create $ X = \{x_\ell\}_{\ell=1}^N $ ($ N = K^d\cdot n $), a new set of samples of $ \Mh $ by means of Algorithm \ref{alg:enrich}.
Explicitly, for each point in the original set $ r_i $ find $ (q(r_i), H(r_i)) $ of \eqref{eq:Step1Minimization} (for an implementation of this see \cite{sober2019manifold,sober2017approximation}).
We then create a uniform grid $ \{\tilde x_{ij}\}_{j=1}^{K^d} $ on $ H $ around the origin.
In other words, in each basis direction on $ H $ take $ K $ uniform samples from $ -\sigma $ to $ \sigma $.
Then, find $ x_{ij} $ for $j=1,\ldots,{K^d} $ by computing the $ k ^{\textrm{th}}$ degree Manifold-MLS projection of $ \tilde x_{ij} $ onto $ \Mh $; i.e.,
\[
x_{ij} = \PP_k(\tilde x_{ij})
.\]

\begin{algorithm}
	\caption{Re-sampling of $ \Mh $}
	\label{alg:enrich}
	\begin{algorithmic}[1]
		\State{\bfseries Input:} \begin{tabular}{ll}
			$ R = \lbrace r_i \rbrace_{i=1}^n\subset \MM\subset\RR^D $ & the original sample set \\
			$ d $ & the intrinsic dimension of the manifold \\
			$ K $ & an enlarging factor\\
			$ k $ & local polynomial degree \\
			$ \sigma $ & parameter to localize the grid - choose $ \sigma \propto h $
		\end{tabular}
		\State{\bfseries Output:} $X =\{x_j\}_{\ell=1}^{K^d\cdot n} \subset \Mh\subset\RR^D$ - a new sample set of $ \Mh $\\
		\For{$r_i \in R$}
			\State Find $ (q(r_i), H(r_i)) $ of \eqref{eq:Step1Minimization} \Comment{see \cite{sober2019manifold,sober2017approximation} for implementation details}
			\State Create $ \{\tilde{x}_{ij}\}_{j=1}^{K^d} $ a uniform grid on $ H $ \Comment{sample between $ [-\sigma,\sigma] $ in each direction.}
			\State For $ j=i\cdot K + j $ project $ x_\ell = \PP_k(\tilde{x}_{ij}) $ \Comment{see \cite{sober2019manifold,sober2017approximation} for implemetation details}
		\EndFor
	\end{algorithmic}
\end{algorithm}

\begin{figure}[h]
	\centering
	\includegraphics[width=\linewidth]{./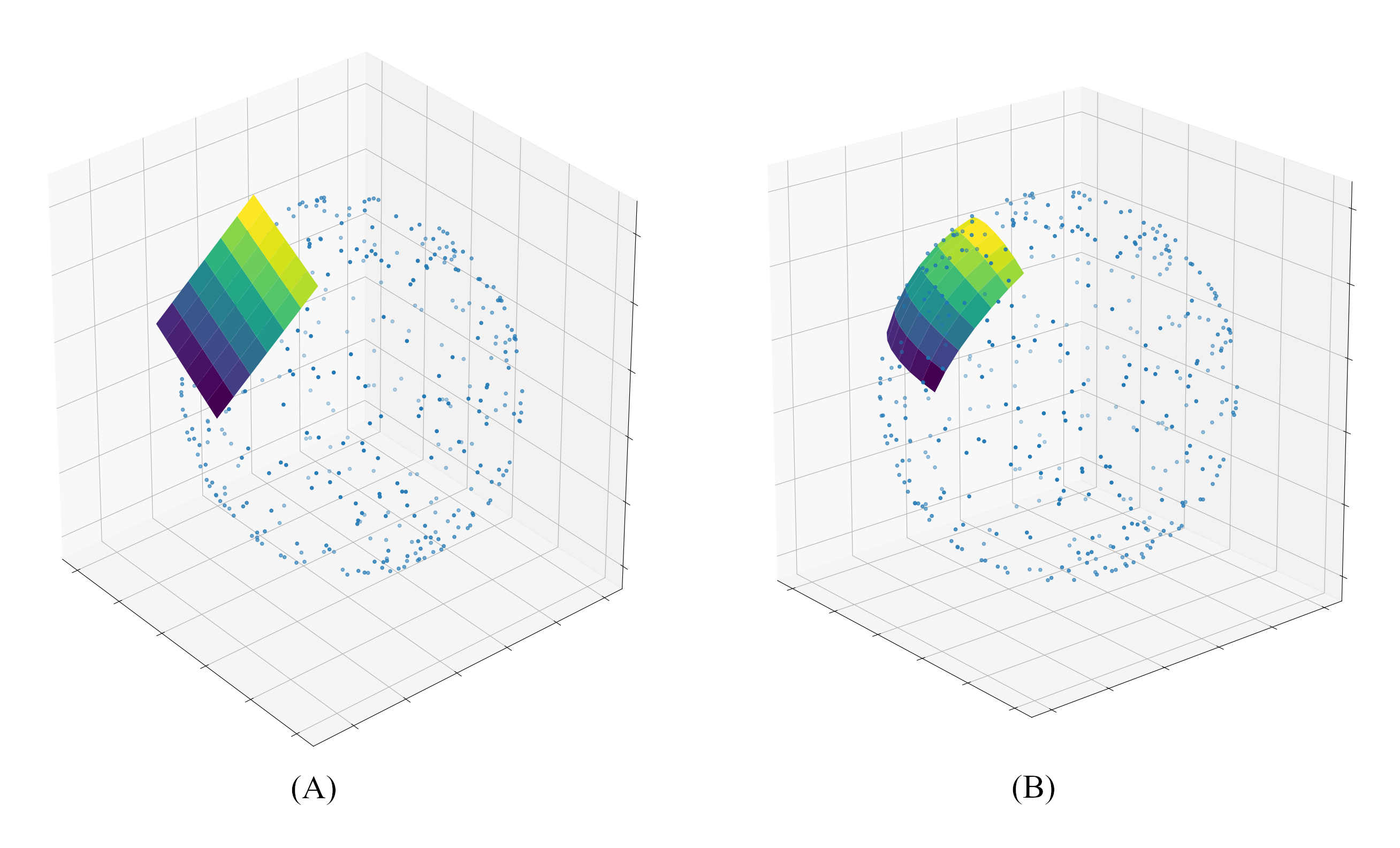}
	\caption{Projecting a grid on the tangent onto the manifold. (A) The original sample set and a grid defined on the tangent approximation produced by the Manifold-MLS approximation. (B) The Manifols-MLS projection of the tangential grid.}
	\label{fig:ProjectTangent}
\end{figure}

Finally, after obtaining the new sample set $ X $, given $ p_1, p_2 \in \MM $ we compute
\[
p_1^h = \PP_k(p_1),~ p_2^h = \PP_k(p_2)
,\]
and compute the shortest path between $ p_1^h $ and $ p_2^h $ with respect to the new cloud of points $ X $.

In the experiments below we have pre-computed $ \sigma $ by the heuristic approach presented in Algorithm \ref{alg:sigma}.

\begin{algorithm}
	\caption{Compute $ \sigma $}
	\label{alg:sigma}
	\begin{algorithmic}[1]
		\State{\bfseries Input:} \begin{tabular}{ll}
			$ R = \lbrace r_i \rbrace_{i=1}^n\subset \MM\subset\RR^D $ & the original sample set \\
			$ d $ & the intrinsic dimension of the manifold \\
			$ k $ & local polynomial degree 
		\end{tabular}
		\State{\bfseries Output:} $\sigma$ \\
		\State $ \sigma_\textrm{est} = zeros(1,100) $
		\For{$j = 1,\ldots,100$}
			\State Pick $ r_{i'}\in R $ at random
			\State Find $ \sigma_{i'} $ such that $ \# (B(r_{i'},\sigma_{i'})\cap R) =  {{k + d} \choose k }$
			\State $ \sigma_\textrm{est}(j) =  \sigma_{i'}$
		\EndFor
		\State $\sigma \leftarrow \max(\sigma_\textrm{est})$ 
	\end{algorithmic}
\end{algorithm}

\subsection{Qualitative Examples}
In the first qualitative experiment, we have drawn uniformly $ 20^2 $ samples of a 2-sphere embedded in $ \RR^3 $ (Fig. \ref{fig:DiscreteShortestPath}B). 
Using Algorithms \ref{alg:enrich} and \ref{alg:sigma}, with the parameters $ k = 3, K = 5, d=2 $ we have created a sample set $ X\subset\Mh $ of cardinality $ 20^2\times 5^2 $.
Fig. \ref{fig:DiscreteShortestPath} shows a comparison between the shortest path computed on $ R $ compared with the shortest path computed on $ X $ (both paths were computed by Dijkstra algorithm).
It is apparent that the initial sample set $ R $ has insufficient resolution, and thus provides very poor estimation of the geodesic path, whereas the generated sample $ X $ yields a very accurate representation of the geodesic using the same data.
\begin{figure}
	\centering
	\includegraphics[width=\linewidth]{./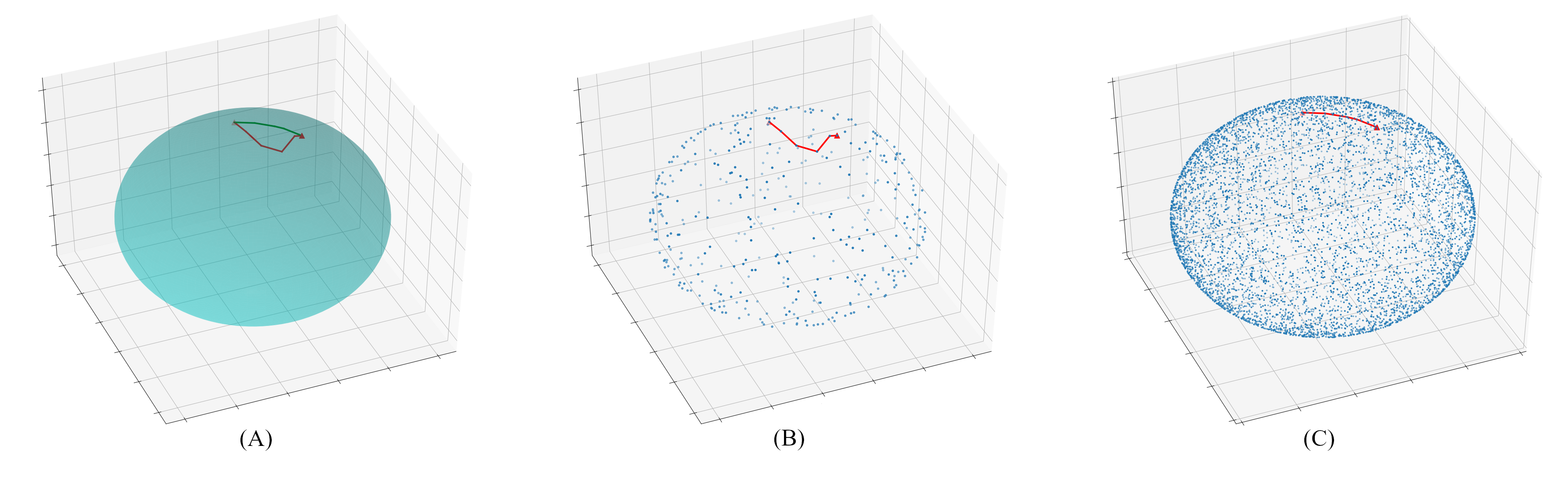}
	\caption{Shortest path from discrete samples of a 2-sphere. (A) The original manifold $ \MM $; the shortest path computed between $ p_1, p_2 $ (marked in red triangles) on discrete samples of the manifold (the path is marked by the red line); shortest path on re-sampling of the Manifold-MLS approximation to $ \MM $ (the path is marked by the green line).
		(B) The original samples of $ \MM $ and the shortest path connecting points $ p_1, p_2 $ marked by the red triangles.
		(C) The samples of the approximating manifold and the shortest path connecting points $ p_1, p_2 $ marked by the red triangles. }
	\label{fig:DiscreteShortestPath}
\end{figure}

On the second qualitative experiment, we took a point cloud obtained from the digitized talus bone of an \emph{Aotus trivirgatus}, a nocturnal monkey native to South America. The data was obtained from Morphosource\footnote{\url{https://www.morphosource.org/Detail/SpecimenDetail/Show/specimen_id/549}} and subsequently processed in Meshlab \cite{cignoni2008meshlab}. (Fig. \ref{fig:BoneShortestPath}C-D). 
In this experiment we first sub-sample the manifold, then compute the shortest path (Fig. \ref{fig:BoneShortestPath}A). Then we up-sample the manifold using the Manifold-MLS and compute the shortest path again (Fig. \ref{fig:BoneShortestPath}B).
\begin{figure}
	\centering
	\includegraphics[width=0.9\linewidth]{./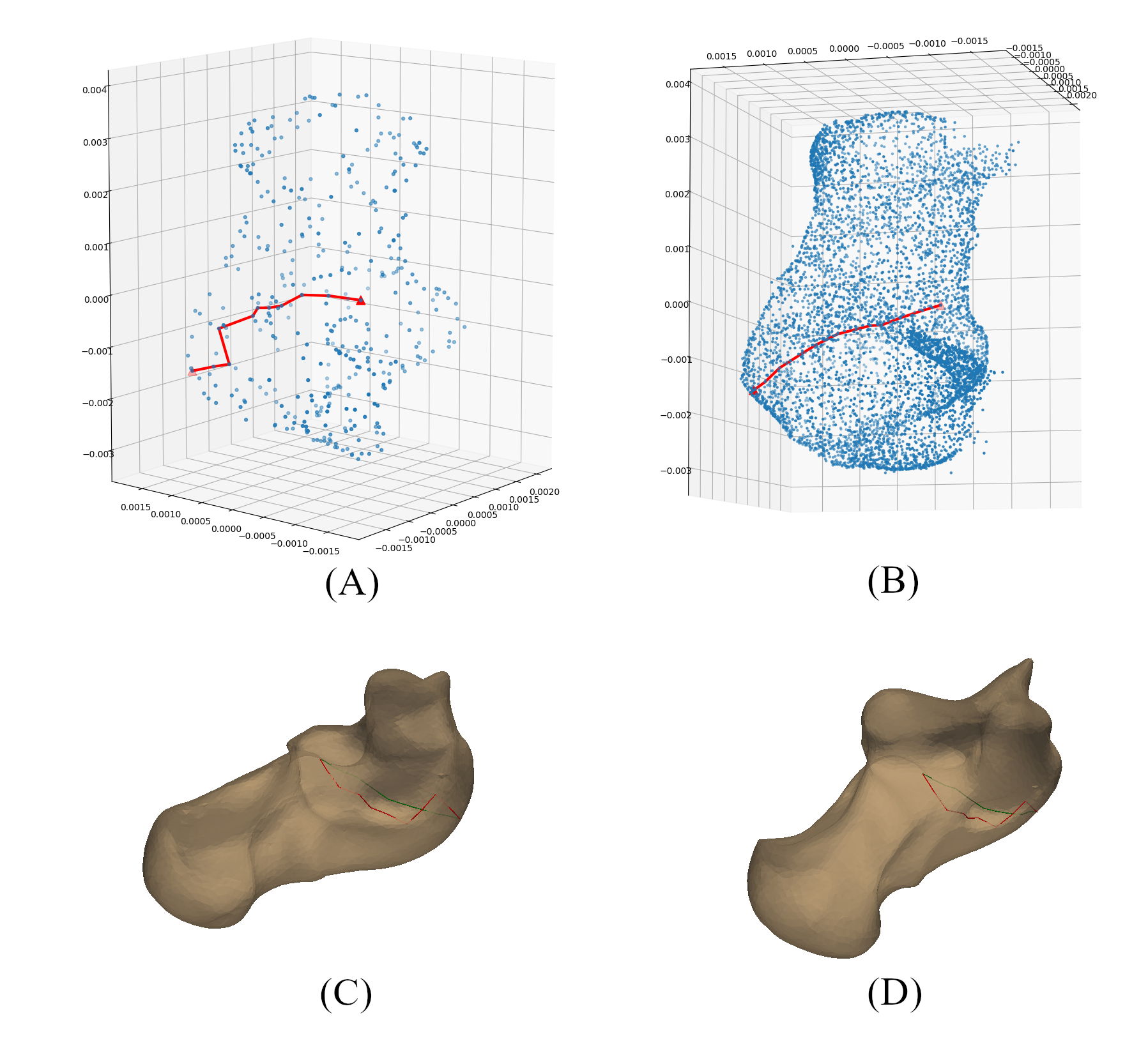}
	\caption{Shortest path from discrete samples of a talus bone. (A) A down-sampled Point Cloud of a talus bone and the shortest path computed between $ p_1, p_2 $ (marked in red triangles) on the sub-sampled manifold (the path is marked by the red line)
	(B) The samples of the $\Mh$ and the shortest path connecting points $ p_1, p_2 $ marked by the red triangles. 
	(C-D) Two angles of both paths overlaying the original mesh of the talus bone. The path computed on the sparse sample set (red) and the path computed on $\Mh$ (green).}
	\label{fig:BoneShortestPath}
\end{figure}

\subsection{A Quantitative Experiment}
In this experiment we aimed at performing a large scale comparison between the accuracy of estimating geodesic distance based upon shortest paths on the original samples of $ \MM $ with that of the approximating manifold.
Although in both cases we compute the geodesic estimation by means of the Dijkstra algorithm, using the generated sample $ X $ is assumed to be a close estimation of the geodesic distance of $ \Mh $, since we can resample $ \Mh $ using Algorithm \ref{alg:enrich} as fine as we wish.
Nonetheless, as can be seen in Table \ref{tab:accuracy} in order to provide good estimation to the geodesic distances the enlarging factor $ K $ need not be to large (in all the experiments below we have used $ K=3 $).

The experiment was conducted on random $ d $-spheres of radius $ 0.5 $ embedded in the cube $ [0,1]^{20} \subset \RR^{20}$, with varying intrinsic dimension and $ \#R = 10^d $. 
The error was computed by the relative root mean squared error (RMSE\%)
\[
\textrm{RMSE\%} = \frac{1}{\textrm{AVG}}\sqrt{\frac{1}{n} \sum_{i=1}^{n} \norm{x_i - \hat x_i}^2}
,\]
where $ \textrm{AVG} $ is the average of the accurate values.
The RMSE\% was computed over 100 realizations.
Explicitly, we have generated a random sample and then measured 100 distances between pairs of samples.
We compare the results of performing Dijkstra on the original sample $ R $, a generated sample $ X_1 $ using $ 1 $-degree Manifold-MLS, and a generated sample $ X_3 $ using $ 3 $-degree Manifold-MLS.
\begin{table}
	\begin{center}
		\begin{tabular}{|c|c|c|c|c|c|}
			\hline
			$ d $  & $ n $ (samples) & $ R $ (RMSE\%) & $ X_1 $ (RMSE\%) & $ X_3 $ (RMSE\%) & Euclidean (RMSE\%) \\ 
			\hline\hline
			2 & 100 & 17.3\% & 8.7\% & 0.5\% & 24.3\%\\
			\hline		
			3 & 1000 & 36.3\% & 1.1\% & 3.2\% & 18\%\\
			\hline
			4 & 10000 & 49.1\% & 7.4\% & 8.0\% & 16\%\\
			\hline				
		\end{tabular}

		\caption{Accuracy of geodesic distance estimation. 
			The experiment was conducted on randomly chosen $ d $-dimensional spheres of radius $ 0.5 $ embedded in $ [0,1]^{20}\subset\RR^{20} $, for which $ n $ uniformly distributed samples were taken.
			Column $ R $ represents the error of applying the Dijkstra algorithm on the original set $ R $.
			Column $ X_1, X_3 $ show the results of applying the Dijkstra algorithm on newly sampled set using $ K=3 $ and degrees $ k=1,3 $ of Manifold-MLS respectively.
			Column Euclidean, represents the error of computing simple Euclidean distance between the points in $ \RR^{20} $.
			The root mean squared error was computed over 100 realizations.}  
		\label{tab:accuracy}
	\end{center}    
\end{table}

\subsection{Robustness to Noise}
Here we took 2-dimensional random sphere of radios $ 0.5 $ embedded in the cube $ [0,1]^{20}\subset \RR^{20} $ and drew 400 samples uniformly distributed on the sphere with additive Gaussian noise $ \epsilon_i\in \RR^{20}$ and $ \epsilon_i \sim \NN(0, \sigma) $ for $ \sigma = 10^{-5}, 10^{-3}, 10^{-2} $.
Explicitly, the sample set was $ R = \{r_i\}_{i=1}^{400} $, and $ r_i = p_i + \epsilon_i $, where $ p_i \in \MM $ and $ \epsilon_i $ is as described above.

We then measured the geodesic distances between $ 100 $ pairs of randomly chosen points from the sample set.
The geodesic distances were measured through applying Dijkstra on the original sample $ R $, a generated sample $ X_1 $ using a $ 1 $-degree Manifold-MLS, and a generated sample $ X_3 $ using a $ 3 $-degree Manifold-MLS.
The generated samples were created by means of Algorithm \ref{alg:enrich} using $ K=5 $.
The results of this experiment are summarized in Table \ref{tab:robust} and show that our algorithm is robust to high levels of noise. 

\begin{table}
	\begin{center}
		\begin{tabular}{|c|c|c|c|c|}
			\hline
			Noise Level  & $ R $ (RMSE\%) & $ X_1 $ (RMSE\%) & $ X_3 $ (RMSE\%) & Euclidean (RMSE\%) \\ 
			\hline\hline
			$ 10^{-5} $ & 17.4\% & 2.93\% & 0.36\% & 15.416\%\\
			\hline		
			$ 10^{-3} $ & 16.0\% & 2.93\% & 0.41\% & 15.413\%\\
			\hline
			$ 10^{-2} $ & 30.0\% & 3.20\% & 1.83\% & 15.282\%\\
			\hline				
		\end{tabular}
		
		\caption{Accuracy of geodesic distance estimation. 
			The experiment was conducted on randomly chosen $ 2 $-dimensional spheres of radius $ 0.5 $ embedded in $ [0,1]^{20}\subset\RR^{20} $, for which $ 400 $ uniformly distributed samples were taken.
			The first column represents the standard deviation of the additive Gaussian noise $ \epsilon_i $.
			Column $ R $ represents the error of applying the Dijkstra algorithm on the original set $ R $.
			Column $ X_1, X_3 $ show the results of applying the Dijkstra algorithm on newly sampled set using $ K=5 $ and degrees $ k=1,3 $ of Manifold-MLS respectively.
			Column Euclidean, represents the error of computing simple Euclidean distance between the points in $ \RR^{20} $.
			The root mean squared error was computed over 100 realizations.}   
		\label{tab:robust}
	\end{center}    
\end{table}
\section*{Acknowledgments}

\bibliographystyle{abbrv}
\bibliography{GeodesicsApprox}

\appendix
\section{\hrho Sets, Quasi Uniform Sampling and Interior Cone Condition}\label{sec:Appendix}
The purpose of this appendix is to bridge between two different sets of assumptions that are used in the convergence analysis of MLS-based methods.
We wrote this appendix in a standalone manner, so that scholars interested just in this explanation will not need to read the whole paper.
Therefore, we repeat some of the definitions explained in Section \ref{sec:Preliminaries}.
As mentioned above, Levin introduced in 1998 the notion of \hrho sets (see Definition \ref{def:h-rho-delta} below) to show optimal rates of convergence of the MLS for function approximation \cite{levin1998approximation}.
In his paper, Levin handles the case of unbounded and boundaryless flat domains.
Then, in order to utilize Levin's bounds, this definition reappeared in \cite{sober2019manifold,sober2017approximation} where similar bounds are derived for the Manifold-MLS as well as MLS approximation of functions defined over manifold domains.
These works lay the foundations for the current paper.

However, in 2001, Wendland takes a seemingly different approach to achieve similar error analysis \cite{wendland2001local,wendland2004scattered} which reappears in \cite{mirzaei2015analysis}, whose results enables a crucial step in the analysis presented in the current paper. 
Namely, Wendland introduces the notion of quasi-uniform sample sets and takes a more general approach to show convergence rates for any \textit{polynomial reproduction} formula. 
Furthermore, in order to deal with boundary problems, Wendland requires that the domain maintain the Interior Cone Condition, which is the case with unbounded domains. 

In the following passages we show that Levin has some redundancy in his \hrho set definition, and that, essentially, any quasi-uniform set is \hrho set.
Thus, the results obtained in \cite{sober2019manifold} and \cite{sober2017approximation} could have been obtained using the assumption of a quasi-uniform set.

\subsection{\hrho Sets and Convergence Rates}
Let $ X = \{x_i\}_{i=1}^n $ be a sample set with pairwise distinct points in $ \Omega\subset\RR^d $, and let $ f(x_i) $ be the function evaluation at this point.
Then, by choosing $ u_j(x):\RR^d\to\RR $ we can construct an approximation of the form
\[s(x) = \sum_{j=1}^n u_j(x)f(x_j).\]
Such an approximation is sometimes called quasi-interpolant.

The MLS approximation is defined through the following minimization.
Let $ \pi^*(x ~|~ \xi) $ be defined by
\begin{equation}
	\pi^*(x ~|~ \xi) = \argmin_{\pi\in\Pi_{k-1}^d}\sum_{i=1}^n\norm{f(x_i) - \pi(x_i)}^2\theta_h(\norm{\xi - x_i})
,\end{equation}
where $ \theta_h $ is compactly supported on $[0, qh]$ and is consistent across scales (i.e., $ \theta_h(th) = \Phi(t) $), and, $ \norm{~\cdot~} $ denotes the Euclidean norm.
Then we define the MLS function as
\begin{equation}\label{eq:MLS_basic}
s^{\text{MLS}}_{f,X}(x) \defeq \pi^*(0 ~|~ x) 
.
\end{equation}

Levin showed in \cite{levin1998approximation} that, given enough samples in $ I(x) $, the neighborhood of $ x $, the MLS approximation defined in  \eqref{eq:MLS_basic} can be pronounced as
\[
\sum_{i\in I(x)} a^*_i(x) f(x_i)
,\]
where the coefficients $ a^*_i(x) $ are determined by minimizing the quadratic form
\[
\sum_{i\in I(x)}a_i(x)^2\frac{1}{\theta_h(x-x_i)}
\] 
under the constraints 
\[
\sum_{i\in I(x)}a_i(x) p(x_i) = p(x),~~ \forall p\in \Pi_{k-1}(\RR^d)
.\]

In other words, the MLS approximant can be  written as the following quasi-interpolant
\begin{equation}\label{eq:s_fX_MLS}
	s^{\text{MLS}}_{f,X}(x) = \sum_{i\in I(x)} a^*_i(x) f(x_i) = \pi^*(0 ~|~ x) 
	.
\end{equation}
In order to show the rates of convergence for the MLS, Levin uses the following notion of an \hrho set.

\begin{definition}[\hrho sets of fill distance $h$, density $\leq \rho$, and separation $\geq \delta$]\label{def:h-rho-delta}
	Let $\Omega$ be a $d$-dimensional domain in $\RR^n$, and consider sets of data points in $\Omega$. We say that the set $X = \lbrace x_i \rbrace_{i=1}^n$ is an  \hrho set if:
	\begin{enumerate}
		\item $h$ is the fill distance with respect to the domain $\Omega$
		\begin{equation*}
		h_{X,\Omega} = \sup_{x\in\Omega} \min_{x_i \in X} \norm{x - x_i}
		.
		\end{equation*}
		Throughout the appendix we will denote for short $ h:=h_{X,\Omega} $.
		\item 
		\begin{equation*}
		\#\left\lbrace X \cap \overline{B}_{qh}(y)  \right\rbrace \leq \rho \cdot q^d, ~~ q\geq 1, ~~ y \in \RR^n.
		\end{equation*}
		Here $\# Y$ denotes the number of elements in a given set $Y$, while $\overline{B}_r(x)$ is the closed ball of radius $r$ around $x$.
		\item $\exists \delta>0$ such that
		\begin{equation*}
		\norm{x_i - x_j} \geq h \delta, ~~ 1 \leq i < j \leq n.
		\end{equation*}
	\end{enumerate}
	\label{def:h-rho-delta_appendix}
\end{definition}
\begin{theorem}[Theorem 5 from \cite{levin1998approximation}]\label{thm:LevinApproximation}
	Let $ f\in \C^k(\Omega) $, and let $ \Omega $ be a compact domain. Then for fixed $ \rho $ and $ \delta $, there exists a fixed $ q>0 $, independent of $ h $, such that $ s_{f,X}^{\text{MLS}} $ of \eqref{eq:s_fX_MLS} with compactly supported weight function $ \theta $ with support of size $ qh $, satisfies
	\[
	\norm{s_{f,X}^{\text{MLS}} - f(x)}_{\Omega, \infty} \leq C_f \cdot h^k
	,\]
	for $ h $ sufficiently small, for \hrho sets of data points, and the constant $ C_f $ depends only on $ f $. 
\end{theorem}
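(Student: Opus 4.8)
The plan is to follow the standard \emph{polynomial-reproduction-plus-Taylor} route, exploiting the two structural facts that the \hrho hypotheses supply: a uniform bound on the number of active samples and a uniform bound on the stability of the quasi-interpolation coefficients. First I would isolate the two ingredients. The first is the polynomial reproduction identity built into the representation \eqref{eq:s_fX_MLS}: the coefficients $a_i^*(x)$ satisfy $\sum_{i\in I(x)} a_i^*(x)\,\pi(x_i) = \pi(x)$ for every $\pi\in\Pi_{k-1}(\RR^d)$. The second is that, by the density condition (item 2 of Definition \ref{def:h-rho-delta}), for any evaluation point $x$ the active index set $I(x) = \{i : \norm{x - x_i}\leq qh\}$ satisfies $\#I(x)\leq \rho q^d$, a bound independent of $h$.

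With these in hand the error decomposition is immediate. Fixing $x\in\Omega$ and letting $\pi$ be the degree-$(k-1)$ Taylor polynomial of $f$ about $x$, reproduction gives
\begin{equation*}
s_{f,X}^{\text{MLS}}(x) - f(x) = \sum_{i\in I(x)} a_i^*(x)\bigl(f(x_i) - \pi(x_i)\bigr) + \bigl(\pi(x) - f(x)\bigr),
\end{equation*}
whence
\begin{equation*}
\abs{s_{f,X}^{\text{MLS}}(x) - f(x)} \leq \left(\sum_{i\in I(x)}\abs{a_i^*(x)} + 1\right)\max_{y\in \overline{B}_{qh}(x)}\abs{f(y) - \pi(y)}.
\end{equation*}
Taylor's theorem controls the last factor by $c\,\abs{f}_{\C^k(\Omega)}h^k$, since every active $x_i$ lies within $qh$ of $x$. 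It therefore remains only to bound $\sum_{i\in I(x)}\abs{a_i^*(x)}$ by a constant independent of $h$ and $x$.

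The crux, and the step I expect to be the main obstacle, is this uniform coefficient bound. Here I would rescale the local problem to unit scale: writing $y_i = (x_i - x)/h$, the self-similarity $\theta_h(th) = \Phi(t)$ converts the $h$-dependent weighted least-squares system \eqref{eq:MLS_Basic_Quadratic_Form} defining the $a_i^*(x)$ into a fixed, scale-free system in the $y_i$ (the coefficients are dimensionless, as reproducing $\pi\equiv 1$ forces $\sum_i a_i^*(x) = 1$). The separation condition (item 3) guarantees the rescaled nodes $y_i$ are $\delta$-separated, and the density condition confines them to a fixed ball containing at most $\rho q^d$ of them; consequently the Gram (moment) matrix of the rescaled system is uniformly well-conditioned, its smallest singular value bounded below by a constant depending only on $d, k, q, \rho, \delta$ and not on $h$, $x$, or the particular node configuration. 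Inverting the system then yields $\sum_i\abs{a_i^*(x)}\leq C$ uniformly. Substituting back gives $\abs{s_{f,X}^{\text{MLS}}(x) - f(x)}\leq C_f h^k$ for every $x$, and taking the supremum over the compact $\Omega$ (on which $\abs{f}_{\C^k(\Omega)}$ is finite) delivers the claimed $L_\infty$ estimate. The genuinely delicate point inside this argument is obtaining the lower bound on the Gram matrix \emph{uniformly over all admissible configurations at once}; I would secure this by a compactness argument on the (compact) space of $\delta$-separated configurations of at most $\rho q^d$ points in a fixed ball, observing that $\Pi_{k-1}$-unisolvence can fail only on a closed subset that the separation hypothesis keeps us away from.
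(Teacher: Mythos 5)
Your skeleton is the same as the paper's (which follows Levin): the polynomial-reproduction-plus-Taylor decomposition, the packing bound $\#I(x)\leq \rho q^d$ from Lemma \ref{lem:h-delta-is-rho}, the rescaling $y_i = (x_i - x)/h$ to exploit $\theta_h(th)=\Phi(t)$, and a compactness argument over node configurations to get an $h$- and $x$-independent bound on $\sum_{i\in I(x)}\abs{a_i^*(x)}$. The decomposition and the Taylor estimate are fine. The problem is the step you yourself flag as the crux.

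Your claim that the moment (Gram) matrix of the rescaled system is uniformly well-conditioned ``consequently'' from $\delta$-separation plus confinement to a fixed ball is false, and the compactness argument you propose does not rescue it. $\Pi_{k-1}$-unisolvence is \emph{not} implied by separation: take all nodes $\delta$-separated but lying on a single hyperplane (or on any algebraic variety of degree $\leq k-1$) cutting through the ball --- the moment matrix is exactly singular no matter how large $\delta$ is. So the closed set of non-unisolvent configurations is not disjoint from your compact space of $\delta$-separated tuples in a fixed ball; the infimum of the smallest singular value over that space is $0$, and no uniform bound on $\sum_i\abs{a_i^*(x)}$ follows from maximizing a quantity that blows up (indeed is undefined) on part of the space. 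What actually excludes degenerate configurations is the hypothesis your proof never touches: item 1 of Definition \ref{def:h-rho-delta}, the fill distance. Since every point of $\Omega$ lies within $h$ of a sample, the ball $\overline{B}_{qh}(x)$ contains samples within $h$ of any prescribed locations in $\Omega\cap \overline{B}_{qh}(x)$; choosing $q$ large enough, one can lay down a $\Pi_{k-1}$-unisolvent reference configuration at scale $qh$ whose unisolvence is stable under perturbations of size $h$, and the samples guaranteed nearby then form a quantitatively unisolvent subset, which is what bounds the coefficients. This is also exactly why the theorem asserts only that ``there exists a fixed $q>0$'': $q$ must be chosen, depending on $k$, $d$, $\rho$, $\delta$, for this to work, whereas your argument treats $q$ as given and plays no role. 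The paper's write-up bounds $\sum_i\abs{a_i^*(x)}$ by citing Levin's continuity of the $a_i^*$ in the node positions and then maximizing over the compact configuration space; your version replaces that citation with a self-contained justification, and it is precisely that justification which is wrong. The repair is to bring the fill distance and the choice of $q$ into the conditioning estimate (e.g., a norming-set argument in the style of Wendland), not more compactness.
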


The proof of this theorem relies on the the following observations.
Let $ p\in\Pi_{k-1} $, then
\begin{align*}
\abs{s_{f,X}^{\text{MLS}}(x) - f(x)} \leq \abs{s_{f,X}^{\text{MLS}} - p(x)} + \abs{p(x) - f(x)}
.\end{align*}
Thus,
\begin{align*}
\abs{s_{f,X}^{\text{MLS}}(x) - f(x)} &\leq \abs{\sum_{i\in I(x)}a^*_i(x)(f(x_i) - p(x_i))} + \norm{p(x) - f(x)}_{\Omega, \infty} \\
&\leq \sum_{i\in I(x)}\abs{a^*_i(x)}\abs{f(x_i) - p(x_i)} + \norm{p(x) - f(x)}_{\Omega, \infty}\\
&\leq \left(1 + \sum_{i\in I(x)}\abs{a^*_i(x)}\right)\norm{p(x) - f(x)}_{\Omega, \infty}
.\end{align*}
Since we can choose $ p(x) $ to be any polynomial in $ \Pi_{k-1} $ we can choose it to be the Taylor expansion of $f$ at $x$, so
\begin{equation}\label{eq:LevinApproximation}
\abs{s_{f,X}^{\text{MLS}}(x) - f(x)} \leq \left(1 + \sum_{i\in I(x)}\abs{a^*_i(x)}\right)h^k\abs{f}_{\C^k(\Omega)}
,\end{equation}
where $ \abs{f}_{\C^k(\Omega)} $ is the semi-norm defined by
$$ \abs{f}_{\C^k(\Omega)}\defeq \max_{\abs{\alpha} = k}\norm{\partial^\alpha f}_{\Omega, \infty} .$$
As a result, the key argument in providing the desired bound is through bounding the expression
\begin{equation}\label{eq:SumAi}
\sum_{i\in I(x)}\abs{a^*_i(x)}
.\end{equation}
Note, that due to the density parameter $ \rho $ of Definition \ref{def:h-rho-delta_appendix} the number of elements in $ I(x) $ the $ qh $-size neighborhood of $ x $ is bounded by $ N = \rho \cdot q^d $, which is independent of $ h $.
In addition, Levin shows that $ a_i^*(x) $ are continuous with respect to the installation of their neighboring samples $ x_j $.
Thus, by denoting the neighbors of $ x $ by $ \{x_{i1}, \ldots, x_{iN}\} $, we can write
\[
\eta(x_{i1}, \ldots, x_{iN}) = \abs{a_i^*(x)}
,\]
and
\[
C_{x, h} = \argmax_{(x_{i1}, \ldots, x_{iN})\in \overline{B}_{qh}(x)} \eta(x_{i1}, \ldots, x_{iN})
.\]
Note, that the maximum exists since the search space is compact.
The fact that the search space is bounded comes directly from the fact that $ (x_{i1}, \ldots, x_{iN})\in \overline{B}_{qh}(x) $.
To show that it is closed, let $ \xi = (x_{i1}, \ldots, x_{iN})  $ be a limit of a sequence $ \xi_k = (x^k_{i1}, \ldots, x^k_{iN}) $ with separation parameter $ \delta $, then $ \xi $ will still have a separation parameter $ \delta $ and will still be contained in $ \overline{B}_{qh}(x_i) $.

Thus, for a fixed $ x $ and fixed $ h $ it follows that
\[
\sum_{i\in I(x)}\abs{a_i^*(x)} \leq  C_{x, h}
,\]
where $ C_{x, h} $ is the same for all \hrho sets of points.
Furthermore, since each $ a_i^*(x) $ depends only on the installation if its $ qh $-size neighboring points, it is clear that $ C_{x, h} = C_h $ is independent of $ x $. 
The proof is finalized by showing that if the weighting is consistent across scales (i.e., $ \theta_h(th)=\Phi(t) $), then $ a_i^*(x) $ are independent of $ h $.
Therefore, $ C_h = C $ is independent of $ h $ as well and
\[
\norm{s_{f,X}^{\text{MLS}} - f(x)}_{\Omega, \infty} \leq C\abs{f}_{\C^k(\Omega)}\cdot h^k
= C_f \cdot h^k
.
\]
\qed

Interestingly, Wendland shows that the definition of density is redundant through a packing argument; that is, it can be derived directly from the separation parameter $ \delta $.
Below we portray the argument to show that given a separation parameter $ \delta $ as above, there exists a density parameter $ \rho $ independent of $ h $.
\begin{lemma}\label{lem:h-delta-is-rho}
	Let $ X $ be a sample set with fill distance $ h $ and separation parameter $ \delta $ (see Definition \ref{def:h-rho-delta_appendix} above). 
	Then, there exists $ \rho $ independent of $ h $ such that 
	\begin{equation*}
	\#\left\lbrace X \cap \overline{B}_{qh}(y)  \right\rbrace \leq \rho \cdot q^d, ~~ q\geq 1, ~~ y \in \RR^n.
	\end{equation*}
	\begin{proof}
		Let $ I(x) $ denote the set of indices of samples $ X \cap \overline{B}_{qh}(x) $, we wish to show that there exists $ \rho $ such that $ \#I(x) \leq \rho q^d $, where $ q\geq 1 $.
		Note, that any open ball $ B_{\frac{1}{2}\delta h}(x_j) $ of radius $ \frac{1}{2}\delta h $ centered at $ x_j $ is disjoint to $ B_{\frac{1}{2}\delta h}(x_k) $ (for $ j,k\in I(x) $ and $ j\neq k $).
		That is,
		\[
			B_{\frac{1}{2}\delta h}(x_j) \cap B_{\frac{1}{2}\delta h}(x_k) = \emptyset
		.\]
		On the other hand, for all $ j\in I(x) $ 
		\[
			B_{\frac{1}{2}\delta h}(x_j) \subset B_{qh + \frac{1}{2}\delta h}(x)
		.\]
		Thus,
		\begin{align*}
		\#I\cdot \Vol{B_{\frac{1}{2}\delta h}(x_j)} &\leq \Vol{B_{qh + \frac{1}{2}\delta h}(x)}\\
		\#I\cdot \Vol{B_{1}(0)}\left(\frac{1}{2}\delta h\right)^d &\leq \Vol{B_{1}(0)}\left(qh + \frac{1}{2}\delta h\right)^d\\
		\#I &\leq \left(2\frac{q}{\delta} + 1\right)^d 
		.\end{align*}
		Now, if $ \delta < 2 $ we have
		\[
		\#I \leq q^d \underbrace{\left(\frac{3}{\delta}\right)^d}_{\rho}
		,\]
		and if $ \delta \geq 2 $ we have
		\[
		\leq q^d  \underbrace{3^d}_{\rho}
		.\]
	\end{proof}
\end{lemma}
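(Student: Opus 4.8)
The plan is to prove this via a standard volume-packing argument that converts the lower bound on pairwise distances (the separation parameter $\delta$) into an upper bound on local point density. The guiding observation is that if the samples are separated by at least $\delta h$, then disjoint balls of radius $\tfrac{1}{2}\delta h$ can be centered at each of them, and counting how many such disjoint balls fit inside a slightly enlarged region yields the desired bound.

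First I would fix $y \in \RR^n$ and $q \geq 1$, and let $I$ denote the index set of samples lying in $\overline{B}_{qh}(y)$. From the separation condition $\norm{x_i - x_j} \geq \delta h$ I would deduce that for distinct $i, j \in I$ the open balls $B_{\frac{1}{2}\delta h}(x_i)$ and $B_{\frac{1}{2}\delta h}(x_j)$ are disjoint. Next I would observe that each such ball is contained in the enlarged ball $B_{qh + \frac{1}{2}\delta h}(y)$, since any point within distance $\tfrac{1}{2}\delta h$ of some $x_i \in \overline{B}_{qh}(y)$ lies within distance $qh + \tfrac{1}{2}\delta h$ of $y$. Summing the volumes of the disjoint small balls and comparing with the volume of the containing ball gives
\[
\#I \cdot \Vol{B_1(0)}\left(\tfrac{1}{2}\delta h\right)^d \leq \Vol{B_1(0)}\left(qh + \tfrac{1}{2}\delta h\right)^d,
\]
and cancelling $\Vol{B_1(0)}$ together with the common factor $h^d$ leaves $\#I \leq \left(2q/\delta + 1\right)^d$.

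The only point requiring care is extracting a bound of the exact form $\rho \cdot q^d$ with $\rho$ independent of both $h$ and $q$. I note that the $h$-dependence cancels automatically in the volume comparison above, which is the crux of why the resulting constant is scale-invariant. To pull out the factor $q^d$, I would invoke $q \geq 1$ to write $2q/\delta + 1 \leq q\left(2/\delta + 1\right)$, so that $\#I \leq q^d \left(2/\delta + 1\right)^d$ and one may take $\rho = \left(2/\delta + 1\right)^d$; alternatively one can case-split on whether $\delta < 2$ or $\delta \geq 2$ to present cleaner constants such as $(3/\delta)^d$ and $3^d$. There is no genuine obstacle here — the argument is elementary — and the fill distance $h$ plays no essential role beyond setting the common length scale that ultimately cancels.
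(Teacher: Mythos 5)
Your proposal is correct and follows essentially the same volume-packing argument as the paper: disjoint balls of radius $\tfrac{1}{2}\delta h$ centered at the samples, containment in $B_{qh+\frac{1}{2}\delta h}(y)$, and a volume comparison in which $h^d$ cancels. Your final step $2q/\delta + 1 \leq q(2/\delta+1)$ (using $q\geq 1$) is a slightly cleaner way to extract the constant $\rho$ than the paper's case split on $\delta$, but it is the same proof in substance.
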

\subsection{Polynomial Reproduction and Interior Cone Condition}

In a similar fashion, Wendland aims at proving similar convergence rates for every polynomial reproduction process.
\begin{definition}\label{def:PolynomialReproduction}
	A process that defines for every set $ X = \{x_i\}_{i=1}^n \subset \RR^d$ a family of functions $ u_i = u_i^X:\Omega\to\RR $, $ 1 \leq i \leq n $, provides a local polynomial reproduction of degree $ k $ on $ \Omega $ if there exists constants $ h_0, C_1, C_2 > 0 $ such that
	\begin{enumerate}
		\item $\sum_{i=1}^{n}u_i(x) p(x_i) = p(x)$ for all $ p\in\at{\Pi_k^d}{\Omega} $,
		\item $ \sum_{i=1}^{n}\abs{u_i(x)}\leq C_1 $ for all $ x\in\Omega $,
		\item $ u_i(x) = 0 $ if $ \norm{x - x_i} > C_2 h $ and $ x\in \Omega $
	\end{enumerate}  
is satisfied for all $ X $ with fill distance $ h \leq h_0 $.
\end{definition}

Using this definition, and applying similar considerations to Levin, he shows the following theorem.

\begin{theorem}[Theorem 3.2 from \cite{wendland2004scattered}]\label{thm:PolyReproduction}
	Suppose that $ \Omega\subset\RR^d $ is bounded.
	Define $ \Omega^* $ to be the closure of $ \cup_{x\in\Omega}B(x, C_2 h_0) $.
	Define $ s_{f,X} = \sum_{i=1}^{n} u_i f(x_i) $, where $ \{u_i\} $ is a local polynomial reproduction of order $ (k-1) $ on $ \Omega $.
	If $ f\in \C^{k}(\Omega^*) $ then there exists a constant $ C > 0 $ depending only on the constants from Definition \ref{def:PolynomialReproduction} such that
	\[
	\abs{f(x) - s_{f, X}(x)}\leq C h^{k}\abs{f}_{\C^k(\Omega^*)}
	\]
	for all $ X $ with $ h \leq h_0 $.
	The semi-norm on the right hand side is defined by 
	$$ \abs{f}_{\C^k(\Omega^*)}\defeq \max_{\abs{\alpha} = k}\norm{D^\alpha f}_{L_\infty(\Omega^*)} .$$
\end{theorem}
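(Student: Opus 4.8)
The plan is to run the classical quasi-interpolation argument, reducing the whole estimate to a Taylor remainder bound combined with the three defining properties of a local polynomial reproduction. Fix an arbitrary $x\in\Omega$ and let $p\in\Pi_{k-1}^d$ be the degree-$(k-1)$ Taylor polynomial of $f$ expanded about $x$, so that $p(x)=f(x)$. First I would invoke the reproduction property (property 1 of Definition \ref{def:PolynomialReproduction}, at order $k-1$), which gives $\sum_{i=1}^n u_i(x)\,p(x_i)=p(x)=f(x)$. Subtracting the definition $s_{f,X}(x)=\sum_i u_i(x)f(x_i)$ and substituting this identity for $f(x)$ yields the error representation
\[
f(x) - s_{f,X}(x) = \sum_{i=1}^n u_i(x)\bigl(p(x_i) - f(x_i)\bigr).
\]
Thus the global approximation error is rewritten entirely in terms of how well the Taylor polynomial matches $f$ at the data sites, weighted by the reproduction coefficients.

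Next I would localize and estimate the remainder. By the locality property (property 3), $u_i(x)=0$ whenever $\norm{x-x_i}>C_2 h$, so only sites with $\norm{x-x_i}\le C_2 h\le C_2 h_0$ survive in the sum. Each such $x_i$ lies in $\Omega^*$, and moreover every point on the segment from $x$ to $x_i$ stays within distance $C_2 h_0$ of the point $x\in\Omega$ and hence inside $\Omega^*$; this is exactly why the enlarged set $\Omega^*$, rather than $\Omega$, appears in the seminorm. Applying Taylor's theorem with remainder along each such segment bounds the degree-$(k-1)$ remainder by $\abs{f(x_i)-p(x_i)}\le c_{k,d}\,\abs{f}_{\C^k(\Omega^*)}\,\norm{x-x_i}^k\le c_{k,d}\,\abs{f}_{\C^k(\Omega^*)}\,(C_2 h)^k$, where $c_{k,d}$ is a combinatorial constant arising from the multinomial expansion and depends only on $k$ and $d$.

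Finally I would assemble the pieces: taking absolute values in the error representation, factoring the uniform remainder bound out of the sum, and invoking the boundedness property (property 2, $\sum_i\abs{u_i(x)}\le C_1$) gives
\[
\abs{f(x) - s_{f,X}(x)} \le \Bigl(\sum_{i=1}^n \abs{u_i(x)}\Bigr)\, c_{k,d}\,(C_2 h)^k\,\abs{f}_{\C^k(\Omega^*)} \le c_{k,d}\,C_1\,C_2^{\,k}\,h^k\,\abs{f}_{\C^k(\Omega^*)}.
\]
Since $x\in\Omega$ was arbitrary, setting $C=c_{k,d}C_1 C_2^{\,k}$ — which depends only on $k$, $d$, and the constants $C_1,C_2$ of Definition \ref{def:PolynomialReproduction} — yields the claimed bound uniformly over all $X$ with $h\le h_0$. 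The only step requiring genuine care, and the main (if mild) obstacle, is the geometric verification that all Taylor-expansion segments remain inside $\Omega^*$, so that the $\C^k(\Omega^*)$ seminorm truly controls every remainder; the rest is algebraic bookkeeping driven by the three defining properties.
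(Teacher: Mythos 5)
Your proof is correct and follows essentially the same route as the paper: it is the classical polynomial-reproduction argument (error representation via the degree-$(k-1)$ Taylor polynomial at $x$, locality restricting the sum to $\norm{x-x_i}\leq C_2h$, and the Lebesgue-type bound $\sum_i\abs{u_i(x)}\leq C_1$), which is exactly the argument the paper sketches for the analogous Theorem of Levin in the same appendix and attributes to Wendland for this statement. Your version is marginally cleaner in that expanding about $x$ itself makes the term $\abs{p(x)-f(x)}$ vanish identically, and you correctly flag the one geometric point — that the Taylor segments lie in $B(x,C_2h_0)\subset\Omega^*$ — which justifies the appearance of the $\C^k(\Omega^*)$ seminorm.
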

Wendland then proceeds to show the argument regarding the MLS function approximation being a polynomial reproduction formula.
Using the representation of the MLS approximation from \eqref{eq:s_fX_MLS}, and by introducing the definition of a quasi-uniform sample set, Wendland establishes the fact that $ s_{f,X}^{\text{MLS}}$ is indeed a polynomial reproduction of degrree $ (k-1) $ on a compact domain as detailed below.

\begin{definition}[quasi-uniform sample set]
	A set of data sites $ X = \{x_1,\ldots,x_n\} $ is said to be quasi-uniform with respect to a constant $ c_{\emph{qu}} > 0 $ if
	\[
	\delta_X \leq h_{X,\Omega} \leq c_{\emph{qu}}\delta_X
	,\]
	where $ h_{X,\Omega} $ is the fill distance of Definition \ref{def:h} and $ \delta_X $ is the separation radius defined by
	\[
	\delta_X := \frac{1}{2}\min_{i\neq j}\norm{x_i - x_j}
	.\] 
\end{definition}
Note that this definition coincides with demanding just $ h $ and $ \delta $ conditions of the \hrho set of Definition \ref{def:h-rho-delta_appendix} above.
Thus, by Lemma \ref{lem:h-delta-is-rho} we get
\begin{corollary}\label{cor:h-rho_is_quasi}
	\hrho sets are the same as quasi-uniform sample sets.
\end{corollary}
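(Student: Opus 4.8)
The plan is to prove that the two notions coincide by unwinding the definitions and invoking Lemma~\ref{lem:h-delta-is-rho} to dispose of the only nontrivial condition. Both definitions are built on the same fill distance $h = h_{X,\Omega}$ (condition~1 of Definition~\ref{def:h-rho-delta_appendix}), so the entire content lies in relating the separation condition~3, the density condition~2, and the two-sided inequality $\delta_X \leq h \leq c_{\mathrm{qu}}\delta_X$ defining quasi-uniformity. The first thing I would record is the elementary fact that $\delta_X \leq h$ holds automatically: taking $x_i, x_j$ to be a closest pair, so that $\norm{x_i - x_j} = 2\delta_X$, the midpoint $m = (x_i + x_j)/2$ satisfies $\min_{x_k\in X}\norm{m - x_k} = \delta_X$ --- any strictly closer point $x_k$ would force $\norm{x_k - x_i} < 2\delta_X$ by the triangle inequality, contradicting minimality --- whence $h \geq \delta_X$. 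This reduces quasi-uniformity to the single inequality $h \leq c_{\mathrm{qu}}\delta_X$.

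Next I would establish the two directions. If $X$ is an \hrho set with separation parameter $\delta$, then condition~3 reads $2\delta_X = \min_{i\neq j}\norm{x_i - x_j} \geq h\delta$, i.e.\ $h \leq (2/\delta)\delta_X$; combined with $\delta_X\leq h$ this is precisely quasi-uniformity with $c_{\mathrm{qu}} = 2/\delta$. Conversely, if $X$ is quasi-uniform, then $h \leq c_{\mathrm{qu}}\delta_X$ gives $\min_{i\neq j}\norm{x_i - x_j} = 2\delta_X \geq (2/c_{\mathrm{qu}})h = h\delta$ with $\delta = 2/c_{\mathrm{qu}}$, so conditions~1 and~3 both hold. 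The remaining density condition~2 is then supplied verbatim by Lemma~\ref{lem:h-delta-is-rho}, which manufactures a $\rho$ depending only on $\delta$ (hence on $c_{\mathrm{qu}}$) and not on $h$.

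The substantive step --- deriving the density bound of condition~2 from the fill distance and separation alone --- is exactly the packing argument already carried out in Lemma~\ref{lem:h-delta-is-rho}, so no new work is required for the corollary itself; it is a bookkeeping consequence of that lemma together with the definitional translations above. I therefore do not expect a genuine obstacle here: the only point deserving mild care is the auxiliary inequality $\delta_X\leq h$, which uses that the midpoint of a closest pair lies in $\Omega$, and for the compact, boundaryless domains under consideration this is immediate.
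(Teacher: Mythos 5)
Your proposal is correct and follows essentially the same route as the paper: identify quasi-uniformity with the fill-distance and separation conditions of the \hrho definition (with the explicit translation $c_{\mathrm{qu}} = 2/\delta$, $\delta = 2/c_{\mathrm{qu}}$), and dispose of the density condition via the packing argument of Lemma \ref{lem:h-delta-is-rho}; the paper does exactly this, only less explicitly. One caveat: your justification of the auxiliary inequality $\delta_X \leq h$ via the Euclidean midpoint of a closest pair is not valid for the domains the paper actually needs ($\Omega = \MM$ a curved submanifold of $\RR^D$), since that midpoint generally does not lie in $\Omega$; the inequality still holds, e.g., by taking a point $m$ on a continuous path in $\MM$ joining the closest pair with $\norm{m - x_i} = \delta_X$ exactly, so that the triangle inequality gives $\norm{m - x_k} \geq \delta_X$ for every sample $x_k$. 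This is a one-line repair and does not affect the structure of your argument.
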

Then, using this notion of quasi-uniform sample set, Wendland shows the following result.
\begin{theorem}\label{thm:WendlandApproximation}
	Suppose $ \Omega\subset\RR^d $ is compact and satisfies the interior cone condition with angle $ \alpha\in (0,\pi/2) $ and radius $ r>0 $. Fix $ k\in \NN $. Then there exist constants $ h_0, C_1 $, and $ C_2 $ depending only on $ k, \alpha, r $ such that for a quasi-uniform sample set $ X = \{x_i\}_{i=1}^n \subset \Omega$ with $ h_{X,\Omega}\leq h_0 $ and $ \theta $ with support $ 2C_2 h_{X,\Omega} $,  the functions $ a^*_i(x) $ provide local polynomial reproduction.
\end{theorem}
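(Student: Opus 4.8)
The plan is to verify directly the three defining properties of a local polynomial reproduction from Definition \ref{def:PolynomialReproduction}, taking $u_i = a_i^*$ and noting that the degree reproduced here is $k-1$. Two of the three properties come essentially for free. Property 1 (polynomial reproduction) is nothing but the linear constraint $\sum_i a_i^*(x) p(x_i) = p(x)$ for all $p\in\Pi_{k-1}^d$ that \emph{defines} the $a_i^*$ in the first place. Property 3 (locality) is immediate from the compact support of $\theta$: since $1/\theta_h(\norm{x-x_i})$ is the weight penalizing $a_i(x)^2$ and $\theta_h$ is supported on a ball of radius proportional to $C_2 h$, any index with $\norm{x-x_i}$ beyond the support carries an infinite penalty in the quadratic form and is forced to vanish, so $a_i^*(x)=0$ there. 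Hence the entire content of the theorem is Property 2: the uniform bound $\sum_i\abs{a_i^*(x)}\leq C_1$ for all $x\in\Omega$, with $C_1$ depending only on $k,\alpha,r$ and \emph{not} on $h$ or on the particular set $X$.

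To establish Property 2 I would first prove a \emph{norming-set inequality}: under the interior cone condition, for every $x\in\Omega$ the sample points lying in a ball $B=B(x,C_2 h)$ (restricted to a cone inside $\Omega$) control the sup-norm of polynomials, i.e. there is a constant $\Lambda=\Lambda(k,\alpha)$ with $\norm{p}_{L_\infty(B)}\leq \Lambda\max_{x_i\in X\cap B}\abs{p(x_i)}$ for all $p\in\Pi_{k-1}^d$. The crucial point --- and the \textbf{main obstacle} --- is that $\Lambda$ must be independent of $h$. This I would obtain by a scaling argument: translating $x$ to the origin and dilating by $1/(C_2 h)$ sends $\Pi_{k-1}^d$ to itself, sends the interior cone to a fixed unit-scale cone of the same aperture $\alpha$, and rescales the fill distance to a fixed fraction $\sim 1/C_2$. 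Choosing $C_2$ large enough (in terms of $k$ and $\alpha$) that this fraction is small, the norming inequality on the fixed unit cone holds with a constant depending only on $k$ and $\alpha$, and the dependence on $h$ has disappeared. Here quasi-uniformity together with Lemma \ref{lem:h-delta-is-rho} guarantees that $\#(X\cap B)$ is bounded by a constant $N$ independent of $h$, which keeps every subsequent norm conversion uniform.

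From the norming inequality, the existence of \emph{some} bounded-$\ell^1$ reproduction follows by duality: the evaluation functional $p\mapsto p(x)$ has norm at most $\norm{p}_{L_\infty(B)}$ on $\Pi_{k-1}^d$, the norming property makes the restriction map $p\mapsto(p(x_i))_{x_i\in X\cap B}$ a bounded-below isomorphism onto its range in $\ell^\infty$, and a Hahn--Banach extension produces coefficients $b_i(x)$ with $\sum_i\abs{b_i(x)}\leq\Lambda$ reproducing $\Pi_{k-1}^d$. It remains to transfer this bound to the actual MLS minimizer. Since $a_i^*(x)$ minimizes $\sum_i a_i(x)^2/\theta_h(\norm{x-x_i})$ over all coefficient vectors obeying the same reproduction constraints, and $(b_i(x))$ is one such competitor, we get $\sum_i (a_i^*(x))^2/\theta_h(\norm{x-x_i})\leq \sum_i b_i(x)^2/\theta_h(\norm{x-x_i})$. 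Using that $\theta$ is bounded above and below by positive constants on the active annulus (so the weights are comparable up to fixed factors), that the number of active indices is at most $N$, and the Cauchy--Schwarz inequality, one passes from this weighted-$\ell^2$ comparison back to $\sum_i\abs{a_i^*(x)}\leq C_1$ with $C_1=C_1(k,\alpha,r)$. This yields Property 2 uniformly in $x$, $h$, and $X$, completing the verification that the $a_i^*$ form a local polynomial reproduction.
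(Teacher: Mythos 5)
Your proposal is correct and takes essentially the same route as the paper, which defers this theorem to Wendland (Theorems 3.14 and 4.7 of \cite{wendland2004scattered}): reproduction and locality are read off the defining constraints and the support of $\theta$, and the core bound $\sum_i \abs{a_i^*(x)}\leq C_1$ is obtained exactly as you describe, by a norming-set argument under the interior cone condition (with the scaling step removing the $h$-dependence), a Hahn--Banach duality producing a bounded-$\ell^1$ competitor, and a Cauchy--Schwarz comparison of the MLS minimizer against that competitor using quasi-uniformity to bound the number of active indices.
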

It is clear that, due to Theorem \ref{thm:PolyReproduction} above, this implies that the MLS approximation has $ O(h^k) $ convergence rates. 
The only detail that still needs to be clarified in this theorem is the \emph{interior cone condition}.
\begin{definition}[interior cone condition]
	A set $ \Omega\subset\RR^d $ is said to satisfy the interior cone condition if there exists an angle $ \alpha\in (0, \pi/2) $ and a radius $ r>0 $ such that for every $ x\in\Omega $ a unit vector $ \xi(x) $ exists such that the cone
	\[
	C(x,\xi,\alpha,r) := \{x + \lambda y ~|~ y\in \RR^d,~ \norm{y} = 1,~ y^T\xi\geq\cos\alpha,~ \lambda\in [0,r]
	\}
	\]
	is contained in $ \Omega $.
\end{definition}
 The main difference between Levin's proof of Theorem \ref{thm:LevinApproximation} and Wendland's proof of Theorem \ref{thm:WendlandApproximation} lies in the way they bound the expression
 \[
 \sum_{i\in I(x)} \abs{a_i^*(x)}
 .\]
 Wendland takes a more generalized approach of showing that given a compact domain $ \Omega $ that satisfies the interior cone condition, every sample set $ X \subset \Omega$ has a polynomial reproduction formula (see Theorem 3.14 in \cite{wendland2004scattered}).
 Then he uses this result to show the desired bound (see the proof of Theorem 4.7 in \cite{wendland2004scattered}). 
 
 \subsection{Conclusion}
 The main conclusion that we can draw from this discussion is that in order to obtain the approximation results reported in \cite{levin1998approximation,sober2019manifold,sober2017approximation} it is sufficient to demand that the sample set is quasi-uniform.
 Furthermore, as can be seen from Levin's result, the approximation is guaranteed when the weight function is compactly supported with a support of size $ qh $, given that there exists at least $ {{k + d} \choose k} $ samples in the support.
 Furthermore, this is also true of the analysis performed in \cite{mirzaei2015analysis}, if we neglect the boundary (e.g., by taking a sub-domain $ \Omega $ such that its closure is in the interior of $ \Omega^* $ and restricting the error analysis to $ \Omega $).
 As mentioned previous, since are dealing with boundaryless domain, this subtle issue is irrelevant to the discussions in the main text.

\end{document}